\renewcommand{\@seccntformat}[1]
{{\csname the#1\endcsname}.\hspace{0.3em}}
\renewcommand{\section}{\@startsection
{section}
{1}
{0mm}
{-1.5\baselineskip}
{\baselineskip}
{\bfseries\normalsize}}
\renewcommand{\subsection}{\@startsection
{subsection}
{2}
{0mm}
{-\baselineskip}
{0.5\baselineskip}
{\normalsize\itshape}}
\theoremstyle{plain}
\newtheorem{lemma}{Lemma}
\newtheorem{corollary}[lemma]{Corollary}
\newtheorem{prop}[lemma]{Proposition}
\newtheorem*{TA}{Theorem~A}
\newtheorem*{TB}{Theorem~B}
\newtheorem*{HL}{Hersch's Lemma}
\theoremstyle{definition}
\newtheorem*{defin*}{Definition}
\theoremstyle{remark}
\newtheorem*{example*}{Example}
\newtheorem*{remark*}{Remark}
\DeclareMathAlphabet{\matheur}{U}{eur}{m}{n}
\DeclareMathAlphabet{\matheus}{U}{eus}{m}{n}
\DeclareMathAlphabet{\matheuf}{U}{euf}{m}{n}
\numberwithin{equation}{section}
\newcommand{\abs}[1]{\left\lvert#1\right\rvert}
\DeclareMathOperator{\PSL}{PSL}
\begin{document}

\author{Gerasim  Kokarev
\\ {\small\it School of Mathematics, The University of Edinburgh}
\\ {\small\it King's Buildings, Mayfield Road, Edinburgh EH9 3JZ, UK}
\\ {\small\it Email: {\tt G.Kokarev@ed.ac.uk}}
}

\title{On the concentration-compactness phenomenon for the first
  Schrodinger eigenvalue}
\date{}
\maketitle

\begin{abstract}
We study a variational problem for the first eigenvalue $\lambda_1(V)$ of
the Schrodinger operator $(-\Delta_g+V)$ on closed Riemannian
surfaces. More precisely, we explore concentration-compactness
properties of sequences formed by $\lambda_1$-extremal potentials.
\end{abstract}

\medskip
\noindent
{\small
{\bf Mathematics Subject Classification (2000):} 58C40, 58E30.}
%


\section{Introduction}
Let $M$ be a closed smooth surface endowed with a Riemannian metric
$g$ of volume one. For a function $V\in L^\infty(M)$ we denote by 
$$
\lambda_0(V)<\lambda_1(V)\leqslant\ldots\leqslant\lambda_k(V)\leqslant\ldots
$$
eigenvalues of the Schrodinger operator $(-\Delta_g+V)$. We suppose
that potentials $V$ have zero mean-values and denote their space by
$$
L^\infty_0 (M)=\{ V\in L^\infty(M): \int_M V\mathit{dVol}_g=0\}.
$$
It is a simple exercise to see that the zero eigenvalue $\lambda_0(V)$
is non-positive for any $V\in L^\infty_0(M)$. The next eigenvalue
$\lambda_1(V)$ is also bounded from above when $V$ ranges over
$L^\infty_0(M)$. More precisely, by the work of Li and
Yau~\cite{LY82}, see also~\cite{El92}, the first eigenvalue
$\lambda_1(V)$ can be estimated by the conformal volume; the latter
can be bounded in terms of the genus $\gamma$ of $M$ in many
cases. For example, for an orientable surface $M$ we have
$$
\lambda_1(V)\leqslant 8\pi(\gamma+1),
$$
where $V$ ranges over $L^\infty_0(M)$. 

We regard the first eigenvalue $\lambda_1(V)$ as a functional on the
space of potentials $L^\infty_0(M)$, and are interested in its
critical points. Following Nadirashvili~\cite{Na96}, a potential $V$
is called $\lambda_1$-{\it extremal} if for any $q\in L^\infty_0(M)$
the function $\lambda_1(V+tq)$, where $t$ ranges in a neighbourhood of
zero, satisfies either the inequality
$$
\lambda_1(V+tq)\leqslant\lambda_1(V)+o(t)\qquad\text{as}\quad t\to 0, 
$$
or the inequality
$$
\lambda_1(V+tq)\geqslant\lambda_1(V)+o(t)\qquad\text{as}\quad t\to 0.
$$
In particular, we see that any $\lambda_1$-{\it maximal} potential is
extremal. Basic properties of extremal potentials are discussed in
Sect.~\ref{prems}. Here we mention only that any extremal potential
$V$ is $C^\infty$-smooth.

The purpose of this note is to communicate the following
concentration-compactness alternative for the first eigenvalue
functional.
\begin{TA}
Let $M$ be a closed surface endowed with a Riemannian metric $g$ of
volume one and $V_n\in L^\infty_0(M)$, $n=1,2,\ldots$ , be a sequence
of $\lambda_1$-extremal potentials for the Schrodinger operator
$(-\Delta_g+V)$. Then there exists a subsequence $V_{n_k}$ such that
one of the following holds:
\begin{itemize}
\item[(i)] the subsequence $(V_{n_k})$ converges in the sense of
  distributions to $8\pi(1-\delta_x)$, for some $x\in M$, and
  $\lambda_1(V_{n_k})\to 8\pi$ as $n_k\to +\infty$;
\item[(ii)] the subsequence $(V_{n_k})$ converges in
  $C^\infty$-topology to an extremal potential $V\in L^\infty_0(M)$,
  and $\lambda_1(V_{n_k})\to\lambda_1(V)>0$ as $n_k\to +\infty$.
\end{itemize}
\end{TA}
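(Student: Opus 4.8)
The plan is to translate $\lambda_1$-extremality into a statement about harmonic maps into spheres and then run a Sacks--Uhlenbeck-type compactness argument for the associated eigenmaps, reading off the alternative from whether energy concentrates. Recall from Sect.~\ref{prems} the Euler--Lagrange description: $V$ is $\lambda_1$-extremal exactly when $(-\Delta_g+V)$ admits first eigenfunctions $\phi_1,\dots,\phi_m$, $L^2$-orthonormal, whose squares sum to a constant; integrating forces $\sum_i\phi_i^2\equiv m$, so $\Phi=m^{-1/2}(\phi_1,\dots,\phi_m)$ maps $M$ to $S^{m-1}$ and satisfies $\Delta_g\Phi=-(\lambda_1(V)-V)\Phi$. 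As the right-hand side is normal to the sphere, $\Phi$ is harmonic, $|\nabla\Phi|^2=\lambda_1(V)-V$, hence $V=\lambda_1(V)-|\nabla\Phi|^2$ and $E(\Phi)=\tfrac12\int_M|\nabla\Phi|^2\,\mathit{dVol}_g=\tfrac12\lambda_1(V)$. Applying this to each $V_n$ yields harmonic maps $\Phi_n\colon M\to S^{m_n-1}$. Since the multiplicity of $\lambda_1$ of a Schr\"odinger operator on a closed surface is bounded in terms of the topology of $M$, and $\lambda_1(V_n)$ is bounded above by the Li--Yau estimate of the introduction, after passing to a subsequence I may assume $m_n\equiv m$ and $\lambda_1(V_n)\to\Lambda$ with $0<\Lambda<\infty$; here $\Lambda>0$ because $\Lambda=0$ would send $E(\Phi_n)\to0$, hence $\Phi_n$ to a constant and the $\phi^n_i$ to constants, impossible for sign-changing first eigenfunctions with $\sum_i(\phi^n_i)^2\equiv m$. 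Thus $E(\Phi_n)=\tfrac12\lambda_1(V_n)$ is a bounded sequence of energies of harmonic maps into a fixed sphere $S^{m-1}$.

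By the bubbling theory for harmonic maps from surfaces (Sacks--Uhlenbeck, H\'elein, and the no-neck-energy theorem), after a further subsequence there are a finite set $\Sigma=\{x_1,\dots,x_l\}\subset M$ and a smooth harmonic map $\Phi_\infty\colon M\to S^{m-1}$ with $\Phi_n\to\Phi_\infty$ in $C^\infty_{\mathrm{loc}}(M\setminus\Sigma)$, a nonconstant harmonic sphere bubbling off at each $x_j$, and the energy identity $\tfrac12\Lambda=E(\Phi_\infty)+\sum_j\mathcal E_j$ with $\mathcal E_j>0$; correspondingly, as distributions, $V_n\to\bar V:=\Lambda-|\nabla\Phi_\infty|^2-\sum_j2\mathcal E_j\,\delta_{x_j}$. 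If $\Sigma=\varnothing$ the convergence is in $C^\infty(M)$, so $V_n\to V:=\Lambda-|\nabla\Phi_\infty|^2$ in $C^\infty$; by the Lipschitz dependence of $\lambda_1$ on $V\in(L^\infty_0(M),\norm{\cdot}_\infty)$ one gets $\Lambda=\lambda_1(V)$, which is positive since $\Phi_\infty$ cannot be constant, and the components of $\Phi_\infty$ are first eigenfunctions of $(-\Delta_g+V)$ with $\sum_i(\phi^\infty_i)^2\equiv m$, so $V$ is extremal: this is alternative (ii).

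The concentration case $\Sigma\neq\varnothing$ is the heart of the matter and the step I expect to be the main obstacle. One half is easy: a nonconstant harmonic sphere in $S^{m-1}$ is a branched minimal sphere after a conformal reparametrization, so its energy equals the area of its image, which is at least $4\pi$ by Gauss--Bonnet and the Gauss equation; hence each $\mathcal E_j\geqslant4\pi$ and $\Lambda=2E(\Phi_\infty)+\sum_j2\mathcal E_j\geqslant8\pi$. The difficulty is the matching bound $\Lambda\leqslant8\pi$, equivalently the rigidity that exactly one bubble forms, of the minimal energy $4\pi$, and that $\Phi_\infty$ is constant. I would attack this by renormalising at a point $x\in\Sigma$: precomposing $\Phi_n$ with conformal dilations of a coordinate disc at $x$, normalised by Hersch's lemma so that the rescaled maps are balanced and cease to concentrate at unit scale, one extracts a blow-up limit $\omega\colon S^2\to S^{m-1}$, a nonconstant harmonic map. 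The pointwise identity $\sum_i(\phi^n_i)^2\equiv m$ is scale-invariant and survives in the limit, giving $\sum_i\omega_i^2\equiv m$, and rescaling the eigenvalue equation identifies the components of $\omega$ with first eigenfunctions on $S^2$ of a Schr\"odinger operator with a $\lambda_1$-extremal potential; by the rigidity of such potentials on the two-sphere --- forced by the sharp Li--Yau bound $\lambda_1\leqslant8\pi$ on $S^2$ and its equality case --- $\omega$ is, up to isometries, the standard totally geodesic $S^2\subset S^{m-1}$, of energy exactly $4\pi$. Feeding this (applied at every point of $\Sigma$) into the energy identity together with $\mathcal E_j\geqslant4\pi$ should then force $l=1$, $\mathcal E_1=4\pi$ and $E(\Phi_\infty)=0$, whence $\Lambda=8\pi$, $\lambda_1(V_n)\to8\pi$, and $\bar V=8\pi-8\pi\,\delta_x=8\pi(1-\delta_x)$: alternative (i). The genuinely delicate points are the choice of renormalisation scales --- so that all the energy escaping to $x$ is captured by a single blow-up --- and the verification that the full $\lambda_1$-extremality, not merely the eigenmap equation, is inherited by the blow-up limit, since it is this that licenses the two-sphere rigidity.
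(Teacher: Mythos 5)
Your setup and the no-concentration case are essentially the paper's: $\lambda_1$-extremality yields a sphere-valued harmonic eigenmap $U_n$ with $\abs{\nabla U_n}^2=\lambda_1(V_n)-V_n$, bounded energy and uniformly bounded multiplicity, and Sacks--Uhlenbeck bubbling applies; if no bubble points occur you correctly read off alternative~(ii). But your treatment of the concentration case diverges from the paper and, as you yourself flag, does not close. Two genuine gaps. First, you have no independent upper bound $\lambda_*\leqslant 8\pi$. The paper proves this first (Lemma~\ref{upper_b}), for \emph{any} concentrating sequence (extremal or not), by a Hersch-type balancing argument applied directly to the ground states $v_n$ on $M$: one picks a small cutoff $\psi$ near the bubble point, uses Hersch's lemma to rotate the composed conformal map so that $\psi(y^i\circ s_n\circ\phi)\perp v_n$, plugs these into the Rayleigh quotient, and the conformal invariance of the Dirichlet energy gives $\lambda_*\leqslant 8\pi+O(\varepsilon^{1/2})$. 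Your use of Hersch's lemma to ``balance the rescaled maps'' is a different, and on its own insufficient, normalisation; it does not produce the needed upper bound.

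Second, your route to the lower bound via rigidity of $\lambda_1$-extremal potentials on $S^2$ rests on an unproved claim: that the blow-up eigenvalue $\bar\lambda$ is the \emph{first} eigenvalue of $(-\Delta_{g_s}+\bar V)$ and that $\bar V$ is $\lambda_1$-extremal. A priori $\bar\lambda$ could be a higher eigenvalue, and you give no argument to exclude this; nor would you directly get $\ell=1$ even granting each bubble energy $8\pi$. The paper sidesteps this entirely: it proves Lemma~\ref{mult} ($\dim\mathit{Span}(\bar u_i)\leqslant 3$) by a nodal analysis in the style of Cheng, using only that each $u_{i,n}$ is a sign-changing first eigenfunction on $M$ with exactly two nodal domains, so the limit $\bar u_i$ (and every nontrivial linear combination) has vanishing order $\leqslant 1$; combined with the fact that harmonic $2$-spheres in $S^2$ have energy a positive multiple of $8\pi$ and the upper bound $\bar\lambda\leqslant\lambda_*\leqslant 8\pi$, this forces $\lambda_*=8\pi$. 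Finally, the paper also controls the number and location of bubbles, and rules out the mixed case ($\abs{\nabla U}^2\not\equiv 0$ \emph{and} a bubble), by a capacity test-function argument inserted into the variational principle for $\lambda_1(V_n)$ against the weak limit $V_n\rightharpoonup\lambda_*-\abs{\nabla U}^2-\sum e_j\delta_{x_j}$; you do not address this case, and without it the dichotomy of Theorem~A is not established.
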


As a consequence, we see that the set of extremal potentials whose
first eigenvalues are bounded away from $8\pi$ is always compact. The
critical value $8\pi$ is the maximal first eigenvalue on the
$2$-sphere endowed with the standard round metric of volume one, and
by Prop.~\ref{sphere} below maximal potentials on it form a
non-compact space. 

The proof of the alternative is based on the two ingredients:
characterisation of extremal potentials via harmonic maps and the use
of the bubbling convergence theorem for the latter. The proof that the
critical value for the concentration can be only $8\pi$ involves more
detailed study of the Schrodinger equation near the bubble point. In
the process we obtain a general upper estimate (Lemma~\ref{upper_b})
for the critical value of a sequence of (not necessarily extremal!)
concentrating potentials. Our methods, combined with the version of
the bubbling convergence theorem in~\cite{Pa96}, also yield a version
of the result for the case of a variable metric on $M$. We state it
explicitly for the completeness.
\begin{TB}
Let $g_n$, $n=1,2,\ldots$ , be a sequence of unit volume metrics on
$M$ converging in $C^2$-topology to a metric $g$, and $V_n$,
$n=1,2,\ldots$ , be a sequence of potentials such that each $V_n$ is
$\lambda_1$-extremal for the Schrodinger operator
$(-\Delta_{g_n}+V)$. Then there exists a subsequence $(V_{n_k})$ such
that one of the following holds:
\begin{itemize}
\item[(i)] the subsequence $(V_{n_k})$ converges in the sense of
  distributions to $8\pi(1-\delta_x)$, for some $x\in M$, and
  $\lambda_1(V_{n_k})\to 8\pi$ as $n_k\to +\infty$;
\item[(ii)] the subsequence $(V_{n_k})$ converges in
  $C^\infty$-topology to an extremal potential $V\in L^\infty_0(M)$,
  and $\lambda_1(V_{n_k})\to\lambda_1(V)>0$ as $n_k\to +\infty$.
\end{itemize}
\end{TB}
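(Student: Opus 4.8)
The plan is to follow the proof of Theorem~A, replacing the bubbling convergence theorem for harmonic maps from a fixed surface by its version for a sequence of varying metrics, as in~\cite{Pa96}. By the characterisation of extremal potentials recalled in Sect.~\ref{prems}, each $V_n$ gives rise to a harmonic map $u_n=(u_{1,n},\dots,u_{k_n+1,n})\colon(M,g_n)\to S^{k_n}$ whose components are first eigenfunctions of $(-\Delta_{g_n}+V_n)$ normalised by $\sum_i u_{i,n}^2\equiv 1$, and one has $V_n=\lambda_1(V_n)-\abs{\nabla u_n}^2_{g_n}$ pointwise on $M$; integrating this identity and using that the $g_n$ have volume one and $\int_M V_n\,\mathit{dVol}_{g_n}=0$ shows that the Dirichlet energy $E_{g_n}(u_n)=\int_M\abs{\nabla u_n}^2_{g_n}\,\mathit{dVol}_{g_n}$ equals $\lambda_1(V_n)$. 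The dimensions $k_n$ are bounded (the multiplicity of the first eigenvalue on a surface is controlled by its genus alone), and the numbers $\lambda_1(V_n)$ are bounded by a constant depending only on the topology of $M$ by the Li--Yau conformal volume estimate; hence the energies $E_{g_n}(u_n)$ are uniformly bounded. Passing to a subsequence we may assume $k_n\equiv k$, regard all the $u_n$ as maps into one fixed sphere $S^k$, and assume $\lambda_1(V_n)\to\Lambda\geqslant 0$.

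Applying the bubble--tree convergence theorem of~\cite{Pa96} to the maps $u_n$ with the metrics $g_n\to g$ in $C^2$, one obtains, after a further subsequence, a finite set $\Sigma=\{x_1,\dots,x_\ell\}\subset M$, a $g$-harmonic map $u\colon M\to S^k$, and finitely many non-constant harmonic spheres $\omega_1,\dots,\omega_N\colon S^2\to S^k$ for the \emph{round} metric (a rescaling at a point of $M$ carries the metrics $g_n$ to the Euclidean metric in $C^2_{loc}$, so the bubbles are genuine round harmonic two-spheres), with $u_n\to u$ in $C^\infty_{loc}(M\setminus\Sigma)$, the energy identity $\Lambda=E_g(u)+\sum_{j=1}^N E(\omega_j)$, and weak convergence of the energy densities
\[
\abs{\nabla u_n}^2_{g_n}\,\mathit{dVol}_{g_n}\;\rightharpoonup\;\abs{\nabla u}^2_g\,\mathit{dVol}_g+\sum_{i=1}^\ell m_i\,\delta_{x_i},\qquad m_i>0 .
\]
As for Theorem~A the argument now splits. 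Suppose first that $\Sigma=\varnothing$. Then $u_n\to u$ and $\abs{\nabla u_n}^2_{g_n}\to\abs{\nabla u}^2_g$ uniformly on $M$, so $V_n\to V:=\Lambda-\abs{\nabla u}^2_g$ uniformly, and by continuity of the spectrum of $(-\Delta_{g_n}+V_n)$ under $C^2$-convergence of metrics and uniform convergence of potentials one gets $\Lambda=\lambda_1(V)$. Passing to the limit in $-\Delta_{g_n}u_{i,n}+V_n u_{i,n}=\lambda_1(V_n)u_{i,n}$ shows that the $u_i=\lim u_{i,n}$ are first eigenfunctions of $(-\Delta_g+V)$ with $\sum_i u_i^2\equiv 1$; in particular $u$ is not constant (otherwise $V\equiv\Lambda$ would be a mean-zero constant, so $V\equiv 0$ and $\Lambda=E_g(u)=0$, contradicting $\lambda_1(0)>0$), $V$ is $\lambda_1$-extremal on $(M,g)$ and hence $C^\infty$-smooth, and $\Lambda=E_g(u)=\lambda_1(V)>0$. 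An elliptic bootstrap then sharpens the convergence $V_n\to V$. This is alternative~(ii).

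Suppose now that $\Sigma\neq\varnothing$, so $N\geqslant 1$ and some energy concentrates. Here one re-runs the analysis of the Schrodinger equation near the concentration points — which is stable under a $C^2$-small perturbation of the metric — together with the general upper estimate of Lemma~\ref{upper_b} applied to the $V_n$, and obtains $\Lambda\leqslant 8\pi$. On the other hand every non-constant round harmonic two-sphere carries energy at least $8\pi$, so the energy identity forces $N=1$, $\ell=1$, $E_g(u)=0$, and a single bubble of energy exactly $8\pi$; thus $\Lambda=8\pi$. Since $\mathit{dVol}_{g_n}\to\mathit{dVol}_g$ uniformly, the weak convergence displayed above together with $\lambda_1(V_n)\to 8\pi$ yields $V_n\to 8\pi(1-\delta_{x_1})$ in the sense of distributions. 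This is alternative~(i).

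The only genuinely new work relative to Theorem~A is checking that each ingredient survives the metric variation: the bubbling input is supplied by~\cite{Pa96}; the correspondence between extremal potentials and sphere-valued harmonic maps is pointwise and metric-by-metric and so carries over verbatim; and the uniform bound on $\lambda_1(V_n)$ is the genus-only Li--Yau estimate. I expect the main obstacle to be the concentration case — re-examining the local behaviour of the Schrodinger equation near a bubble point, that is, the argument behind Lemma~\ref{upper_b} and the identification of $8\pi$ as the only admissible concentration value, uniformly in $n$ so that the $C^2$-small errors coming from $g_n-g$ do not affect the conclusion; the point is that after rescaling the metrics flatten in $C^2_{loc}$, the bubble is an honest round harmonic sphere, and the energy gap $8\pi$ is attained in the limit. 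A secondary technical point is the bootstrap in alternative~(ii): one should check that the regularity available from the merely $C^2$ metrics $g_n$ still suffices to upgrade the uniform convergence of the $V_n$ to the convergence asserted, using that the limit $V$ is a potential for the \emph{smooth} metric $g$.
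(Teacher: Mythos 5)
Your proposal follows exactly the route the paper itself indicates for Theorem~B: replace the Sacks--Uhlenbeck bubbling theorem by Parker's variable-metric version~\cite[Lem.~1.2]{Pa96} and re-run the arguments of Sections~\ref{alternative} and~\ref{bubble} essentially verbatim, the eigenfunction-to-harmonic-map dictionary, the energy bound $E_{g_n}(u_n)=\lambda_1(V_n)$, the genus-only multiplicity bound, and the Li--Yau estimate being metric-by-metric statements. The one place where you genuinely depart from the paper is the concentration case. You invoke the full bubble-tree \emph{energy identity} $\Lambda=E_g(u)+\sum E(\omega_j)$ (no neck loss) together with the energy gap ``every non-constant harmonic $S^2\to S^k$ has energy at least $8\pi$'', which immediately forces a single bubble, $E_g(u)=0$, and $\Lambda=8\pi$ once the upper bound $\Lambda\leqslant 8\pi$ of Lemma~\ref{upper_b} is in hand. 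The paper instead establishes the single-bubble, zero-background structure by hand via the capacity test-function arguments of Sect.~\ref{alternative}, and obtains the lower bound $8\pi$ not from a general energy gap in $S^k$, but by first proving Lemma~\ref{mult} (a nodal-set argument in the spirit of Cheng) that the bubble actually lands in a totally geodesic $S^2\subset S^{m-1}$, and then using the $8\pi$-quantization for $S^2\to S^2$. Your route is shorter but imports the Calabi--Barbosa quantization for sphere targets and the ``no neck energy'' part of~\cite{Pa96}; the paper's is longer but self-contained, needing only the measure convergence of energy densities (not the full identity) and the classical $S^2\to S^2$ fact. Both handle the varying metric correctly: as you note, the rescaling flattens $g_n$ in $C^2_{\mathrm{loc}}$ so the bubbles are genuine round harmonic spheres, and the test-function/Hersch argument behind Lemma~\ref{upper_b} is stable under $C^2$ perturbation of the metric. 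One thing worth making explicit when you write this up is that Lemma~\ref{upper_b} is applied only \emph{after} the structure $V_{n}\rightharpoonup\lambda_*(1-\delta_x)$ has been established; your phrase ``re-run the analysis of the Schrodinger equation near the concentration points'' does carry that content, but the order of the steps should not be left implicit.
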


\noindent
In a forthcoming paper we shall study questions related to this
concentration-compactness alternative in dimension greater than two.

\smallskip
\noindent {\it Acknowledgments.} The author is grateful to Nikolai
Nadirashvili for a number of discussions on the subject. The author
acknowledges the support of EPSRC and the Maxwell Mathematical
Institute during the work on the paper.

\section{Preliminaries}
\label{prems}
\subsection{Notation. Properties of extremal potentials}
Let $M$ be a closed smooth surface. For a Riemannian metric $g$ on $M$
the Laplace operator $\Delta_g$ in local coordinates $(x^i)$,
$1\leqslant i\leqslant 2$, has the form
$$
\Delta_g=\frac{1}{\sqrt{\abs{g}}}\frac{\partial~}{\partial x^i}
\left(\sqrt{\abs{g}}g^{ij}\frac{\partial~}{\partial x^j}\right),
$$
where $(g_{ij})$ are components of the metric $g$, $(g^{ij})$ is the
inverse tensor, and $\abs{g}$ stands for $\det(g_{ij})$. We use above
the summation convention for the repeated indices. We suppose
throughout that $g$ is normalised such that $\mathit{Vol}_g(M)$ equals
one. For a function $V\in L^\infty_0(M)$ we denote by
$$
\lambda_0(V)<\lambda_1(V)\leqslant\ldots\leqslant\lambda_k(V)\leqslant\ldots
$$
the eigenvalues of the Schrodinger operator $(-\Delta_g+V)$; these are
real numbers for which the equation
\begin{equation}
\label{eifun}
(-\Delta_g+V)u=\lambda_ku
\end{equation}
has a non-trivial solution. The solutions of equation~\eqref{eifun} are
called eigenfunctions. Recall that by variational characterisation the
eigenvalue $\lambda_k(V)$ is the infimum of the Rayleigh quotient 
$$
\matheur R_V(u)=\frac{\int_M\abs{\nabla u}^2
\mathit{dVol}_g+\int_MVu^2\mathit{dVol}_g}{\int_Mu^2\mathit{dVol}_g}
$$
over the set of all smooth functions $u$ that are $L_2$-orthogonal to
the eigenfunctions for $\lambda_0$, $\lambda_1$, \ldots,
$\lambda_{k-1}$. In particular, we see that
$$
\lambda_0(V)\leqslant\matheur R_V(1)=0\quad\text{for any }V\in
L^\infty_0(M).
$$

Now we discuss the properties of extremal potentials. The following
proposition is a sharpened version of the one due to
Nadirashvili~\cite{Na96}; below we outline the proof based on the
first variation formula for $\lambda_1$.
\begin{prop}
\label{props}
For a function $V\in L^\infty_0(M)$ the following hypotheses are
equivalent:
\begin{itemize}
\item[(i)] $V$ is a $\lambda_1$-extremal potential for the Schrodinger
  operator;
\item[(ii)] the quadratic form
\begin{equation}
\label{form}
u\longmapsto \int_Mqu^2\mathit{dVol}_g
\end{equation}
is indefinite on the space of $\lambda_1$-eigenfunctions of the
Schrodinger operator $(-\Delta_g+V)$ for any $q\in
L^\infty_0(M)$;
\item[(iii)] there exists a finite collection of
  $\lambda_1$-eigenfunctions $(u_i)$ of the Schrodinger operator
  $(-\Delta_g+V)$ such that $\sum_i u_i^2=1.$
\end{itemize}
\end{prop}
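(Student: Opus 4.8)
The plan is to deduce (i)$\Leftrightarrow$(ii) from the first variation formula for the eigenvalue $\lambda_1$, and (ii)$\Leftrightarrow$(iii) from a separation argument among symmetric matrices. Throughout write $E$ for the $\lambda_1(V)$-eigenspace, $m=\dim E$, fix an $L_2$-orthonormal basis $e_1,\dots,e_m$ of $E$, and for $q\in L^\infty_0(M)$ set $Q_q=\bigl(\int_Mq\,e_ie_j\,\mathit{dVol}_g\bigr)_{ij}\in\mathrm{Sym}(m)$, the Gram matrix of the form~\eqref{form} restricted to $E$.

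\textbf{(i)$\Leftrightarrow$(ii).} Perturb $V$ to $V+tq$. Since $\lambda_0(V)<\lambda_1(V)$ and $\lambda_1(V)$ has multiplicity $m$, analytic perturbation theory gives that the eigenvalues of $(-\Delta_g+V+tq)$ near $\lambda_1(V)$ are $\lambda_1(V)+t\mu_j(Q_q)+o(t)$, where $\mu_1(Q_q)\leqslant\dots\leqslant\mu_m(Q_q)$ are the eigenvalues of $Q_q$; hence $\lambda_1(V+tq)=\lambda_1(V)+t\mu_1(Q_q)+o(t)$ as $t\to0^+$ and $\lambda_1(V+tq)=\lambda_1(V)+t\mu_m(Q_q)+o(t)$ as $t\to0^-$. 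Substituting this into the two alternatives in the definition of a $\lambda_1$-extremal potential and letting $t\to0^\pm$, one sees that the first alternative holds for a given $q$ exactly when $\mu_1(Q_q)\leqslant0\leqslant\mu_m(Q_q)$ and the second exactly when $Q_q=0$; in either case the form~\eqref{form} is neither positive nor negative definite on $E$, and conversely, if~\eqref{form} is never definite then $V$ is extremal (the first alternative always applies). This is precisely (ii).

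\textbf{(iii)$\Rightarrow$(ii).} If $u_1,\dots,u_r\in E$ satisfy $\sum_iu_i^2\equiv1$, then $\sum_i\int_Mq\,u_i^2\,\mathit{dVol}_g=\int_Mq\,\mathit{dVol}_g=0$ for every $q\in L^\infty_0(M)$. Were~\eqref{form} positive definite on $E$, each summand would be $\geqslant0$, forcing all $\int_Mq\,u_i^2$ to vanish and hence $u_i\equiv0$ for all $i$, contradicting $\sum u_i^2\equiv1$; the negative definite case is excluded the same way. So~\eqref{form} is indefinite on $E$ for every $q$.

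\textbf{(ii)$\Rightarrow$(iii).} Introduce the smooth map $\Phi\colon M\to\mathrm{Sym}(m)$, $\Phi(x)=(e_i(x)e_j(x))_{ij}$; each $\Phi(x)$ is positive semidefinite and, by orthonormality, $\int_M\Phi\,\mathit{dVol}_g=\mathrm{Id}$. Let $P$ be the closed convex hull of $\Phi(M)$, equivalently $P=\{\int_M\Phi\,d\mu:\mu\ \text{a probability measure on }M\}$ — a compact convex subset of $\mathrm{Sym}(m)$ having $\mathrm{Id}$ as its barycentre with respect to $\mathit{dVol}_g$. Decomposing a mean-zero $q$ as $q^+-q^-$ with $\int q^+\,\mathit{dVol}_g=\int q^-\,\mathit{dVol}_g$ identifies $Q_q$ with a difference of two elements of $P$, and conversely (after a weak-$\ast$ approximation of probability measures by smooth densities) every such difference is realised by some $Q_q$; hence (ii), which entails that no $Q_q$ is positive definite, forces the compact convex symmetric set $D:=P-P$ to miss the interior of the cone of positive semidefinite matrices. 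Separating $D$ from that open convex cone yields a nonzero $A\in\mathrm{Sym}(m)$ with $\langle A,X\rangle\leqslant c\leqslant\langle A,Y\rangle$ for all $X\in D$ and all positive semidefinite $Y$; self-duality of the positive semidefinite cone forces $A$ to be positive semidefinite, while the symmetry of $D$ forces $\langle A,X\rangle=0$ on $D$, i.e.\ $A\perp(P-\mathrm{Id})$. Therefore $\langle A,\Phi(x)\rangle=\langle A,\mathrm{Id}\rangle=\trace A>0$ for every $x$; normalising so that $\trace A=1$ and setting $u_i=\sum_j(A^{1/2})_{ij}e_j$ produces $\lambda_1$-eigenfunctions with $\sum_iu_i^2=\langle A,\Phi(\cdot)\rangle\equiv1$, which is (iii).

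\textbf{Main obstacle.} The delicate step is (ii)$\Rightarrow$(iii): one must identify the right finite-dimensional picture — the map $x\mapsto(e_i(x)e_j(x))$ into symmetric matrices, with $\mathrm{Id}$ appearing as the barycentre — and recast the analytic extremality condition as the statement that $P$ carries no pair ordered by the positive definite cone; after that, Hahn--Banach separation together with the self-duality of the positive semidefinite cone does the work. A minor technical nuisance is the passage from probability (or signed) measures to honest potentials in $L^\infty_0(M)$, which is handled by mollification while keeping the mean value equal to zero.
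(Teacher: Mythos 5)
Your proof is correct, and the overall architecture matches the paper's: the first variation formula for $(i)\Leftrightarrow(ii)$, a direct cancellation for $(iii)\Rightarrow(ii)$, and a convex separation argument for $(ii)\Rightarrow(iii)$. The first two items are essentially the paper's arguments written in the Gram-matrix language, which does make the translation of the one-sided derivative alternatives into the matrix conditions $\mu_1(Q_q)\leqslant 0\leqslant\mu_m(Q_q)$ especially transparent.

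Where you genuinely diverge is $(ii)\Rightarrow(iii)$. The paper works in $L_2(M)$: it forms the convex hull $K$ of squared eigenfunctions, assumes $1\notin K$, and invokes Hahn--Banach to produce an $f$ strictly separating $1$ from $K$; passing to the mean-zero part $f_0$ then gives a $q$ for which the form~\eqref{form} is positive definite, contradicting $(ii)$. You instead go to the finite-dimensional target $\mathrm{Sym}(m)$, build the map $\Phi(x)=e(x)e(x)^{\top}$ with barycentre $\mathrm{Id}$, and argue that $(ii)$ forces the symmetric compact set $D=P-P$ to miss the open cone of positive definite matrices. Separating and exploiting the self-duality of the semidefinite cone produces a nonzero PSD matrix $A$ annihilating $P-\mathrm{Id}$; taking $u_i=\sum_j(A^{1/2})_{ij}e_j$ then realises $(iii)$ explicitly. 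Both are separation arguments and they are, in a precise sense, dual to one another (your $A$ plays the role of the paper's separating functional restricted to the eigenspace), but yours is more constructive and handles the compactness/closedness issues cleanly by staying in $\mathrm{Sym}(m)$; the paper's version is shorter but more informal about the closure of $K$ and the strictness of the separation on the cone side. The only spot in your write-up worth tightening is the passage between probability measures and potentials $q\in L^\infty_0(M)$: as you yourself note, it is a mollification plus an openness argument (if some $X\in D$ were positive definite, a nearby $Q_q$ would be too), but it deserves to be stated, not gestured at.
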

\begin{proof}[Outline of the proof.]
Denote by $V_t$ the family of functions $(V+tq)$ in $L^\infty_0(M)$;
we assume that $t$ ranges in a neighbourhood of zero. Suppose that the
first eigenvalue $\lambda_1(V)$ has multiplicity $m$. Then by 
general perturbation theory~\cite{Kato} there exists $m$ analytic
families of real numbers $\Lambda_{i,t}$ and functions $u_{i,t}$,
where $i=1,\ldots,m$, such that
\begin{equation}
\label{f_1}
(-\Delta_g+V_t)u_{i,t}=\Lambda_{i,t}u_{i,t},\qquad
\Lambda_{i,0}=\lambda_1(V).
\end{equation}
Assume that the $L_2$-norms of the $u_{i,t}$'s are equal to
one. Differentiating relation~\eqref{f_1} with respect to $t$ and
evaluating the result at $t=0$, we obtain
\begin{equation}
\label{f_2}
qu_{i,0}+(-\Delta_g+V)u'_{i,0}=\Lambda'_{i,0}u_{i,0}+\Lambda_{i,0}u'_{i,0}.
\end{equation}
Multiplying the identities~\eqref{f_1} and~\eqref{f_2} by $u'_{i,0}$
and $u_{i,0}$ respectively and substracting the first from the second,
after integration, we obtain the first variation formula 
\begin{equation}
\label{fvf}
\left.\frac{d}{dt}\right|_{t=0}\Lambda_{i,t}=\int_Mqu^2_{i,0}
\mathit{dVol}_g.
\end{equation}
The discussion implies that the function $\lambda_1(V_t)$ is
piece-wise smooth and has left and right derivatives. Moreover, there
exist indices $i$ and $j$ such that
$$
\left.\frac{d}{dt}\right|_{t=0-}\lambda_1(V_t)=\Lambda'_{i,0}\qquad
\text{and}\qquad
\left.\frac{d}{dt}\right|_{t=0+}\lambda_1(V_t)=\Lambda'_{j,0}.
$$
To prove the statement $(i)\Rightarrow (ii)$, we note that the
potential $V$ is extremal if and only if
$$
\left.\frac{d}{dt}\right|_{t=0-}\lambda_1(V_t)\cdot
\left.\frac{d}{dt}\right|_{t=0+}\lambda_1(V_t)\leqslant 0.
$$
This together with relations above and formula~\eqref{fvf} proves that
the form~\eqref{form} is indefinite.

To prove the converse statement $(ii)\Rightarrow (i)$ we note that in
the basis $(u_{i,0})$, where $i=1,\ldots, m$, the quadratic
form~\eqref{form} is diagonal:
$$
\int_Mqu_{i,0}u_{j,0}\mathit{dVol}_g=\left.\frac{d}{dt}\right|_{t=0}
\Lambda_{i,t}\cdot \delta_{ij}.
$$
This follows by differentiating relation~\eqref{f_1} in the manner
similar to the one used to obtain~\eqref{fvf}. Since $\lambda_1(V_t)$
equals $\min_i\Lambda_{i,t}$, we get
$$
\left.\frac{d}{dt}\right|_{t=0+}\lambda_1(V_t)=\min_i\Lambda'_{i,0}=
\min_i\int_Mqu^2_{i,0}\mathit{dVol}_g,
$$
$$
\left.\frac{d}{dt}\right|_{t=0-}\lambda_1(V_t)=\max_i\Lambda'_{i,0}=
\max_i\int_Mqu^2_{i,0}\mathit{dVol}_g.
$$
Since the form~\eqref{form} is indefinite, then either one of this
derivatives vanishes or they have different signs. This means that the
potential $V$ is extremal.

\noindent
$(ii)\Rightarrow (iii)$. Let $K$ be the convex hull of the set of
squared $\lambda_1$-functions $\{u^2:$ $u$ is an
eigenfunction$\}$. Suppose the contrary to the hypotheses~$(iii)$;
then $1\ne K$. By classical separation results, there exists a
function $f\in L_2(M)$ such that
$$
\int_M1\cdot f\mathit{dVol}_g<0\quad\text{and}\quad
\int_M\phi\cdot f\mathit{dVol}_g>0,\quad\text{where }\phi\in
K\backslash\{0\}.
$$
Let $f_0$ be the mean-value part of $f$,
$$
f_0=f-\int_Mf\mathit{dVol}_g.
$$
Then for any eigenfunction $u$ we have
$$
\int_Mf_0u^2\mathit{dVol}_g=\int_Mfu^2\mathit{dVol}_g-\left(\int_Mf
\mathit{dVol}_g\right)\left(\int_Mu^2\mathit{dVol}_g\right)>0.
$$
This is a contradiction with~$(ii)$.

\noindent
$(iii)\Rightarrow (ii)$. Conversely, let $(u_i)$ be a finite
collection of eigenfunctions satisfying the hypothesis~$(iii)$. Then
for any $q\in L^\infty_0(M)$, we have
$$
\int_Mq(\sum_iu_i^2)\mathit{dVol}_g=\int_Mq\mathit{dVol}_g=0.
$$
This demonstrates the hypothesis~$(ii)$.
\end{proof}

As a consequence we point out the following properties of extremal
potentials.
\begin{corollary}
\label{props:col}
Let $V\in L^\infty_0(M)$ be an extremal potential for the Schrodinger
operator. Then $V$ is $C^\infty$-smooth, and is bounded by its first
eigenvalue $\lambda_1(V)\geqslant V$. Besides, the equality above
occurs at most at a finite number of points, and the first eigenvalue
is positive.
\end{corollary}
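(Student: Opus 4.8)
\textit{Proof plan.} The plan is to base everything on characterisation $(iii)$ of Proposition~\ref{props}: fix a finite family of $\lambda_1$-eigenfunctions $u_1,\dots,u_n$ of $(-\Delta_g+V)$ with $\sum_i u_i^2=1$. Applying the Laplace operator to this relation and using $\Delta_g u_i=(V-\lambda_1(V))u_i$ gives
$$
\sum_i\abs{\nabla u_i}^2=-\sum_i u_i\Delta_g u_i=(\lambda_1(V)-V)\sum_i u_i^2,
$$
that is, the basic identity
$$
V=\lambda_1(V)-\sum_i\abs{\nabla u_i}^2 .
$$
It holds a priori only almost everywhere, after the standard elliptic estimates that promote the $u_i$ from $W^{1,2}$ to $W^{2,p}(M)$ for every $p<\infty$, using $V\in L^\infty(M)$. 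All four assertions of the corollary will be read off from this identity.

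The pointwise bound $\lambda_1(V)\geqslant V$ is immediate, since $\sum_i\abs{\nabla u_i}^2\geqslant 0$. Smoothness follows by bootstrapping: $u_i\in W^{2,p}$ for all $p<\infty$ gives $u_i\in C^{1,\alpha}$, hence the right-hand side of the identity lies in $C^{0,\alpha}$ and so does $V$; Schauder estimates for $\Delta_g u_i=(V-\lambda_1(V))u_i$ then give $u_i\in C^{2,\alpha}$, hence $V\in C^{1,\alpha}$, and iterating yields $V,u_i\in C^\infty(M)$. Positivity is also quick: if $\lambda_1(V)\leqslant 0$, then $V\leqslant\lambda_1(V)\leqslant 0$ together with $\int_M V\,\mathit{dVol}_g=0$ forces $V\equiv 0$, whence $\lambda_1(V)=\lambda_1(-\Delta_g)>0$, a contradiction.

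The substantial point is that the equality set $E=\{x\in M:V(x)=\lambda_1(V)\}$ is finite. By the basic identity, $x\in E$ precisely when $\nabla u_i(x)=0$ for all $i$, i.e. when the differential of the smooth map $u=(u_1,\dots,u_n)\colon M\to S^{n-1}\subset\mathbb{R}^n$ (it lands in the sphere because $\sum_i u_i^2=1$) vanishes at $x$. The key observation --- this is the harmonic-map characterisation alluded to in the introduction --- is that $u$ is a harmonic map: plugging $\sum_j\abs{\nabla u_j}^2=\lambda_1(V)-V$ into the tension-field equation $\Delta_g u_i+(\sum_j\abs{\nabla u_j}^2)u_i=0$ for $S^{n-1}$ turns it into the identity $(V-\lambda_1(V))u_i+(\lambda_1(V)-V)u_i=0$. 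Moreover $u$ cannot be constant, for otherwise each $u_i$ is constant and the eigenvalue equation forces $V\equiv\lambda_1(V)$, hence (since $\int_M V\,\mathit{dVol}_g=0$) $V\equiv 0$ and $\lambda_1(V)=0$, which is impossible as above. By the standard structure theory for harmonic maps from surfaces --- the differential $\partial u$ is a holomorphic section of an appropriate bundle over $M$, equivalently the Hopf differential $\sum_i(\partial_z u_i)^2\,dz^2$ is holomorphic, and when it vanishes identically $u$ is a branched minimal immersion --- the critical points of the non-constant map $u$ are isolated; by compactness of $M$ the set $E$ is therefore finite.

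The routine parts are the elliptic bootstrap for smoothness and the sign manipulations for the bound and for positivity; the real content is the finiteness of $E$, and within it the step to get right is the recognition that the eigenfunction system is exactly the harmonic-map equation into $S^{n-1}$, after which the isolation of critical points is a known fact about harmonic maps from surfaces.
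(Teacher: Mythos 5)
Your proof follows essentially the same route as the paper: extract eigenfunctions $(u_i)$ with $\sum_i u_i^2=1$ from Proposition~\ref{props}(iii), derive the identity $V=\lambda_1(V)-\sum_i\abs{\nabla u_i}^2$, read off the bound, smoothness, and positivity from it, and obtain finiteness of the equality set by interpreting $(u_i)$ as a non-constant harmonic map into a round sphere whose critical points are isolated. The only quibble is a slight imprecision in your parenthetical on the structure theory: holomorphicity of $\partial u$ (as a section of $T^{*1,0}M\otimes u^*TS^{n-1}\otimes\mathbf C$ with its Koszul--Malgrange holomorphic structure) is not the same statement as holomorphicity of the Hopf differential $\sum_i(\partial_z u_i)^2\,dz^2$, though both are standard facts; the one you actually use to isolate zeros of $du$ is the former (or the Hartman--Wintner/Aronszajn-type unique-continuation argument used by Cheng and cited by the paper), so the phrase ``equivalently'' should be dropped. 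This does not affect the correctness of the argument.
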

\begin{proof}
Since $V$  is extremal, by the proposition above there exists a 
collection of eigenfunctions $(u_i)$, $i=1,\ldots,k$ , such that
$\sum_iu_i^2=1$. This means that the map
\begin{equation}
\label{map}
M\ni x\longmapsto (u_1(x),\ldots,u_k(x))\in S^{k-1}\subset\mathbf R^k
\end{equation}
is weakly harmonic, see~\cite{Hel96}. Since the dimension of $M$
equals $2$, the eigenfunctions are continuous, and by standard
regularity theory~\cite{LU68} the map given by~\eqref{map} is actually
$C^\infty$-smooth. Applying the Laplacian to the identity
$\sum_iu_i^2=1$, we further obtain the relation
\begin{equation}
\label{pot_f}
V=\lambda_1(V)-\sum_i\abs{\nabla u_i}^2.
\end{equation}
Thus, the potential $V$ is also $C^\infty$-smooth, and is bounded by
its first eigenvalue. The points where $\lambda_1(V)$ equals $V$
corresond to the branch points of the harmonic map~\eqref{map}; there
can be only finite number of these unless the harmonic map is
constant. The latter can not occur. For otherwise,
relation~\eqref{pot_f} together with the hypothesis $V\in
L^\infty_0(M)$ imply that both $V$ and $\lambda_1(V)$ vanish
identically.  Thus, $\lambda_1(V)$ becomes the first eigenvalue of the
Laplacian $(-\Delta_g)$, which is strictly positive -- a
contradiction. Finally, the positivity of $\lambda_1(V)$ follows by
integration of~\eqref{pot_f}.
\end{proof}

In sequel we freely identify collections of eigenfunctions $(u_i)$
such that $\sum_iu_i^2=1$ with harmonic maps into round spheres. For
basic properties and facts on the latter we refer to excellent
texts~\cite{EL83, Hel96}.

\subsection{Examples of extremal potentials}
Here we mention simplest examples of $\lambda_1$-extremal
potentials. We start with the case when $M$ is a standard round
sphere. The following proposition is a version of the theorem of
Hersch~\cite{H70}. 
\begin{prop}
\label{sphere}
Let $M$ be a $2$-sphere endowed with the standard round metric
$g$ of volume one. Then the maximal first eigenvalue of the
Schrodinger operator is equal to $8\pi$, and is achieved by the zero
potential. Further, any extremal potential on $M$ is maximal, and has
the form
$$
V(x)=64\pi^2-8\pi\abs{\nabla s}^2(x),\qquad x\in M,
$$
where $s:S^2\to S^2$ is a Mobious transformation and $\abs{\nabla s}$
stands for the Hilbert-Schmidt norm of its differential. In
particular, the space of maximal potentials is non-compact.
\end{prop}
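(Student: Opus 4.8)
My plan is to exploit the equivalence established in Proposition~\ref{props} together with the relation~\eqref{pot_f} to reduce the statement to a fact about harmonic maps from the round $S^2$ into $S^{k-1}$. First, any extremal potential $V$ on the round sphere corresponds to a collection of $\lambda_1(V)$-eigenfunctions $(u_i)$ with $\sum_i u_i^2=1$, i.e.\ to a harmonic map $\Phi=(u_1,\dots,u_k)\colon S^2\to S^{k-1}$ whose components are eigenfunctions of the Schr\"odinger operator $(-\Delta_g+V)$ for the same eigenvalue $\lambda_1(V)$. By relation~\eqref{pot_f} we have $V=\lambda_1(V)-\abs{\nabla\Phi}^2$, so once I pin down the harmonic map I recover $V$. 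The key input is the classification of harmonic maps $S^2\to S^{k-1}$: since the domain is the round $2$-sphere and the target a round sphere, such a harmonic map is (up to composition with a rotation of the target) given by a collection of spherical harmonics of some fixed degree $d$, restricted suitably; more precisely its components lie in the degree-$d$ eigenspace of $(-\Delta_g)$, whose eigenvalue on the unit-volume sphere is $4\pi d(d+1)$. I should argue that $d=1$ is forced for an extremal potential: the condition $\sum_i u_i^2 = 1$ with the $u_i$ being degree-$d$ spherical harmonics, combined with the requirement $V\in L^\infty_0(M)$ (i.e.\ $\int_M V=0$, which from~\eqref{pot_f} gives $\lambda_1(V)=\int_M\abs{\nabla\Phi}^2=4\pi d(d+1)$), must be checked for consistency with the positivity and extremality already known; degree-$1$ spherical harmonics are exactly the restrictions of linear coordinate functions, and the constraint $\sum u_i^2=1$ then characterizes precisely the post-compositions of the standard inclusion $S^2\hookrightarrow\mathbf R^3$ with a conformal (M\"obius) transformation $s$ of $S^2$.

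Concretely, the second step: for $d=1$, $\lambda_1(V)=4\pi\cdot 1\cdot 2=8\pi$, so $V=8\pi-\abs{\nabla\Phi}^2$ where $\Phi=s$ is a M\"obius transformation of $S^2$ normalized so that $\sum_i u_i^2 = 1$ holds with respect to the unit-volume metric (which rescales the standard $S^2(1)$ of area $4\pi$ by a factor $1/\sqrt{4\pi}$ in length). Tracking the normalization constants, the expression becomes $V(x)=64\pi^2-8\pi\abs{\nabla s}^2(x)$ as stated, where $\abs{\nabla s}$ is the Hilbert--Schmidt norm of the differential computed in the unit-volume metric. The fact that the maximal first eigenvalue equals $8\pi$ and is attained by $V\equiv 0$ is Hersch's theorem (the zero potential gives $\lambda_1(-\Delta_g)=8\pi$ on the unit-volume round sphere), and the argument above shows that \emph{every} extremal potential also has $\lambda_1(V)=8\pi$, hence every extremal potential is maximal. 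Non-compactness of the space of maximal potentials then follows because the group of M\"obius transformations of $S^2$ is non-compact (e.g.\ a sequence $s_n$ degenerating toward a constant map concentrated at a point $x_0$ yields $V_n=64\pi^2-8\pi\abs{\nabla s_n}^2$ with no convergent subsequence in $L^\infty$, indeed $V_{n}\to 8\pi(1-\delta_{x_0})$ in the distributional sense — consistent with alternative~(i) of Theorem~A).

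The main obstacle I anticipate is the step that rules out $d\geqslant 2$, i.e.\ showing that an extremal potential cannot arise from higher-degree spherical harmonics. The point is subtle: higher-degree harmonic maps $S^2\to S^{k-1}$ with $\sum u_i^2=1$ do exist (they are harmonic \emph{eigenmaps}, and their components are eigenfunctions for $-\Delta_g+V$ with $V=4\pi d(d+1)-\abs{\nabla\Phi}^2$ of mean zero), so the extremality condition alone does not obviously exclude them — one needs to show that for such $V$ the eigenvalue $4\pi d(d+1)$ is actually $\lambda_1(V)$ only when $d=1$. I expect this to follow from a comparison argument: if $d\geqslant 2$, one can exhibit a test function (for instance, built from a degree-$1$ spherical harmonic, which is $L_2$-orthogonal to the degree-$d$ eigenspace) with Rayleigh quotient $\matheur R_V$ strictly below $4\pi d(d+1)$, contradicting that this is the \emph{first} nonzero eigenvalue. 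Verifying this strict inequality, using $V=4\pi d(d+1)-\abs{\nabla\Phi}^2$ and an integral identity relating $\int_M \abs{\nabla\Phi}^2 \ell^2$ to $\int_M \abs{\nabla \ell}^2$ for a linear function $\ell$, is where the real work lies; everything else is bookkeeping with normalization constants and invoking standard regularity (Corollary~\ref{props:col}) and the classification of harmonic maps from $S^2$.
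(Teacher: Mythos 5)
Your proof plan has a genuine gap at the central step. You assert that a harmonic map $\Phi=(u_1,\dots,u_k)\colon S^2\to S^{k-1}$ is, up to a target rotation, given by a collection of spherical harmonics of a fixed degree $d$, so that its components lie in an eigenspace of $-\Delta_g$. This is false for two independent reasons. First, the $u_i$ in this problem are eigenfunctions of the Schr\"odinger operator $-\Delta_g+V$, not of the Laplacian $-\Delta_g$; for a nonzero $V$ they are not spherical harmonics, so the whole framework of degree-$d$ eigenspaces simply does not apply. Second, even in the Laplacian case, general harmonic maps $S^2\to S^{k-1}$ are not eigenmaps: for example, any holomorphic map $S^2\to S^2$ of degree $d\geqslant 2$ is harmonic, yet its components are not eigenfunctions of the round Laplacian. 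The ``classification by degree $d$'' you invoke is the Do Carmo--Wallach theory for eigenmaps (or minimal isometric immersions), which is a strictly narrower class than harmonic maps. Consequently, your remaining task of ``ruling out $d\geqslant 2$'' never gets off the ground, because there is no $d$ to rule out.

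The paper's route avoids this entirely. It first appeals to Cheng's bound (\cite{Ch76}) on the multiplicity of the first eigenvalue of a Schr\"odinger operator on a surface: for $S^2$ the multiplicity of $\lambda_1(V)$ is at most $3$, for \emph{any} smooth potential $V$. Hence the eigenfunctions $(u_i)$ span a space of dimension at most $3$, and the harmonic map $\Phi$ takes values in a great $S^2\subset S^{k-1}$. The energy of a non-constant harmonic map $S^2\to S^2$ is an integer multiple of $8\pi$ (in the unit-volume normalization), while Hersch's bound gives $\lambda_1(V)=E(\Phi)\leqslant 8\pi$, and positivity of $\lambda_1(V)$ rules out the constant map; so $\lambda_1(V)=8\pi$ and every extremal potential is maximal. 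The explicit formula for maximal potentials is then quoted from \cite[Cor.~2.7]{El92}. Your Hersch-theorem step and the non-compactness argument via degenerating M\"obius transformations are fine and coincide in spirit with the paper; what is missing is the multiplicity bound, which is what replaces the incorrect ``spherical harmonics of degree $d$'' claim.
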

The proof is outlined below; the key ingredient is the following
lemma, see~\cite{H70,LY82}. 
\begin{HL}
Let $y^i$, $i=1,2,3$, be coordinate functions in $\mathbf R^3$, and
$\phi:M\to S^2\subset\mathbf R^3$ be a conformal map to the unit
sphere centred at the origin. Then for any absolutely continuous
measure $\mu$ on $M$ there exists a conformal diffeomorphism $s:S^2\to
S^2$ such that
$$
\int_M(y^i\circ s\circ\phi)d\mu=0,\qquad\text{for all~ }i=1,2,3.
$$
\end{HL}
\begin{proof}[Proof of Prop.~\ref{sphere}]
First, we show that the zero potential is maximal. Its first
eigenvalue is the first Laplacian eigenvalue of the standard round
metric of volume one, and is equal to $8\pi$. By Hersch's lemma for
any $V\in L_0^\infty(M)$ the exists a conformal diffeomorphism
$s:S^2\to S^2$ such that the functions $(y^i\circ s)$, $i=1,2,3$, are
$L_2$-orthogonal to the ground state of the Schrodinger operator
$(-\Delta_g+V)$. Thus, by variational characterisation, we have
$$
\lambda_1(V)\int_M(y^i\circ s)^2\mathit{dVol}_g\leqslant\int_M
\abs{\nabla(y^i\circ s)}^2\mathit{dVol}_g+\int_MV(y^i\circ s)^2
\mathit{dVol}_g
$$
for any $i=1,2,3$. Since the volume of $g$ equals one, summing these
identities, we obtain 
$$
\lambda_1(V)\leqslant\sum_i\int_M\abs{\nabla(y^i\circ
  s)}^2\mathit{dVol}_g=\sum_i\int_M\abs{\nabla y^i}^2
\mathit{dVol}_g=8\pi;
$$
here the first equality holds by the conformal invariance of the
Dirichlet energy. Thus, $\lambda_1(V)\leqslant 8\pi$, and the zero
potential is, indeed, maximal.

Now we show that any extremal potential $V$ is, in fact, maximal. By
Prop.~\ref{props} there exists a collection of first eigenfunctions
$(u_i)$, $i=1,\ldots,k$, such that $\sum_iu_i^2=1$. By
Cor.~\ref{props:col} the potential $V$ is $C^\infty$-smooth, and the
result in~\cite{Ch76} says that the multiplicity of its first
eigenvalue is not greater than $3$. Thus, the harmonic
map~\eqref{map}, defined by eigenfunctions $(u_i)$, lies in the section
of the unit sphere by a subspace whose dimension is not greater than
$3$. In other words, this harmonic map is a map into the
$2$-dimensional unit sphere. As is known~\cite{EL83}, its energy 
$$
\int_M\sum_i\abs{\nabla u_i}^2\mathit{dVol}_g
$$
is an integer multiple of $8\pi$. By relation~\eqref{pot_f}, it
coincides with $\lambda_1(V)$, and by the discussion above  can be
either zero or $8\pi$. By Cor.~\ref{props:col}, the former can not
occur. Thus, the first eigenvalue $\lambda_1(V)$ equals to $8\pi$, and
the potential $V$ is maximal.

Finally, \cite[Cor~2.7]{El92} implies that any maximal potential $V$
on the standard $2$-sphere has the form
$$
V=64\pi^2-8\pi\sum_i\abs{\nabla(y^i\circ s)}^2,
$$
where $s$ is a Mobious transformation of $S^2$. The latter form a
non-compact group $\PSL(2,\mathbf{C})$, and the space of maximal
potentials can be identified with its topological quotient by the
equivalence relation
$$
s_1\sim s_2\qquad\text{iff}\qquad \abs{\nabla s_1}^2=\abs{\nabla s_2}^2.
$$
It is a straightforward calculation to see that the natural projection
onto $\PSL(2,\mathbf{C}) \backslash\sim$ is proper and, in particular,
the quotient $\PSL(2,\mathbf{C})\backslash\sim$ has to be non-compact.
\end{proof}

It is also straightforward to construct examples of extremal
potentials on tori. For example, by Prop.~\ref{props} for any flat
torus the zero potential is extremal for the first eigenvalue. 
Moreover, if $M$ is the Clifford torus (the quotient by the lattice
$\mathbf Z(1,0)\oplus\mathbf Z(0,1)$) or the equilateral torus (the
quotient by $\mathbf Z(1,0)\oplus\mathbf Z(1/2,\sqrt{3}/2)$), then the
zero potential is a unique global maximiser in $L^\infty_0(M)$;
see~\cite{El92,El00} for the details.

\section{Proof of Theorem~A: the alternative}
\label{alternative}
\subsection{The setup}
Let $V_n$, $n=1,2,\ldots,$ be a given sequence of extremal
potentials. Since the $\lambda_1(V_n)$'s are non-negative and,
by~\cite{El92,LY82}, uniformly bounded, without loss of generality we
can suppose that the sequence $\lambda_1(V_n)$ converges to a limit
$\lambda_*\geqslant 0$. By Prop.~\ref{props} for each $n\in\mathbf N$
there exists a finite collection of eigenfunctions $(u_{i,n})$,
$i=1,\ldots,m_n$, such that $\sum_iu^2_{i,n}=1$. Since the potentials
$V_n$'s are smooth, by the results in~\cite{Ch76} the multiplicities
of the $\lambda_1(V_n)$'s are uniformly bounded in terms of the genus
of $M$ only. Thus, after a selection of a subsequence, we may suppose
that for each $n\in \mathbf N$ there exists the same number of
eigenfunctions $(u_{i,n})$, $i=1,\ldots,m$, such that
$\sum_iu^2_{i,n}=1$. In other words, for each potential $V_n$ we have
a harmonic map
$$
M\ni x\longmapsto U_n(x)=(u_{i,n}(x))\in S^{m-1}\subset\mathbf R^m.
$$
As in the proof of Corollary~\ref{props:col}, we see that
\begin{equation}
\label{id}
\abs{\nabla U_n}^2=\sum_i\abs{\nabla u_{i,n}}^2=\lambda_1(V_n)-V_n.
\end{equation}
In particular, the energies
$$
E(U_n):=\int_M\abs{\nabla U_n}^2\mathit{dVol}_g
$$
of these harmonic maps are equal to $\lambda_1(V_n)$ and, hence, are
bounded. Now by the {\em bubbling convergence theorem} for harmonic
maps~\cite{SaU,Jo}, there exists a subsequence, also denoted by
$(U_n)$, which converges weakly in $W^{1,2}(M,S^{m-1})$ to a harmonic
map $U:M\to S^{m-1}$. Moreover, there exists a finite number of
``bubble points'' $\{x_1,\ldots, x_\ell\}\subset M$ such that the
$U_n$'s converge in $C^\infty$-topology on compact sets in
$M\backslash\{x_1,\ldots,x_\ell\}$, and the energy densities
$\abs{\nabla U_n}^2$ converge weakly in the sense of measures to
$\abs{\nabla U}^2$ plus a finite sum of Dirac measures:
$$
\abs{\nabla U_n}^2\rightharpoonup\abs{\nabla
  U}^2+\sum_je_j\delta_{x_j},
$$
where the constants $e_j>0$ correspond to the energies of the
so-called bubble spheres, see Sect.~\ref{bubble}-\ref{rems}.

Now we consider two cases when the energy density $\abs{\nabla U}^2$
of the limit map vanishes identically or not. In the former case we
obtain the claim~$(i)$; the case when $\abs{\nabla U}^2\not\equiv 0$
corresponds to the claim~$(ii)$.

\subsection{The case $\abs{\nabla U}^2\equiv 0$: concentration to a
  single point}
\label{concentrat}
First, we show that there is at least one ``bubble point''. For
otherwise, the harmonic maps $U_n$ converge in $C^\infty$-topology to
a constant map $U$. By relation~\eqref{id} we then conclude that the
potentials $V_n$ converge in $C^\infty$-topology to zero and so do
their first eigenvalues $\lambda_1(V_n)$. The latter implies that the
Laplacian $(-\Delta_g)$ has constant first eigenfunctions -- a
contradiction. Thus, the energy measures of the harmonic maps $U_n$
converge weakly to a sum of Dirac-measures,
$$
\abs{\nabla U_n}^2\rightharpoonup\mu=\sum_je_j\delta_{x_j}.
$$
Now we show that at most one delta-function can occur in the sum above.

Suppose the contrary. Then there are at least two ``bubble points''
$x_1$ and $x_2$. Denote by $\Omega_1$ and $\Omega_2$ their open
non-intersecting coordinate neighbourhoods that do contain any other
``bubble points''. Since a point has zero capacity, then for any
$\varepsilon>0$ there exist functions $\varphi_i\in
C^\infty_0(\Omega_i)$ such that $0\leqslant\varphi_i\leqslant 1$,
$$
\varphi_i=1\text{ in a neighbourhood of }x_i,\quad\text{and}\quad
\int_M\abs{\nabla\varphi_i}^2\mathit{dVol}_g<\varepsilon,\quad i=1,2.  
$$
Let $v_n$ be a $\lambda_0$-eigenfunction (ground state) of the
Schrodinger operator $(-\Delta_g+V_n)$. Further, let $\alpha_{1,n}$
and $\alpha_{2,n}$ be two sequences of real numbers such that the
linear combinations $\sum_i\alpha_{i,n}\varphi_i$ are $L_2$-orthogonal
to the $v_n$'s, and the sum of squares $\sum_i\alpha_i^2$ equals one
for any $n$. Without loss of generality, we may suppose that the
$\alpha_{i,n}$'s converge to some $\alpha_i$'s; the limit $\alpha_i$'s
clearly satisfy the relation $\sum_i\alpha_i^2=1$. Finally, denote by
$\psi_n$ and $\psi$ the functions $\sum_i\alpha_{i,n}\varphi_i$ and
$\sum_i\alpha_i\varphi_i$ respectively.

By construction, each function $\psi_n$ is $L_2$-orthogonal to $v_n$, and
by variational principle we have
\begin{equation}
\label{var_ineq}
\lambda_1(V_n)\int_M\psi_n^2\mathit{dVol}_g\leqslant\int_M\abs{\nabla\psi_n}^2
\mathit{dVol}_g+\int_MV_n\psi_n^2\mathit{dVol}_g.
\end{equation}
Since $V_n\rightharpoonup\lambda_*-\sum_je_j\delta_{x_j}$, then
passing to the limit, we obtain
$$
\lambda_*\int_M\psi^2\mathit{dVol}_g\leqslant\int_M\abs{\nabla\psi}^2
\mathit{dVol}_g+\lambda_*\int_M\psi^2\mathit{dVol}_g-\sum_i\alpha_i^2e_i.
$$
The last relation implies
$$
\sum_i\alpha_i^2e_i\leqslant\int_M\abs{\nabla\psi}^2\mathit{dVol}_g
\leqslant\varepsilon.
$$
Choosing $\varepsilon<\min\{e_i\}$, we obtain a contradiction. Thus,
the limit measure $\mu$ is one-point supported; $\mu=e\delta_x$ for
some $x\in M$.

Since the potentials $V_n$'s have zero mean-value, we conclude
from~\eqref{id} that the constant $e$ equals $\lambda_*$. Now for a
proof of the claim~$(i)$ it remains to show that $\lambda_*$ equals
$8\pi$. We explain this in Sect.~\ref{bubble}.

\subsection{The case $\abs{\nabla U}^2\not\equiv 0$: regularity of the
  limit measure}
Recall that by the bubbling convergence theorem for harmonic maps, the
energy densities $\abs{\nabla U_n}^2$ converge weakly to the measure
$$
\mu=\abs{\nabla U}^2+\sum_je_j\delta_{x_j};
$$
here we suppose that $U:M\to S^{m-1}$ is a non-trivial harmonic
map. First, the argument similar to the one in
Sect.~\ref{concentrat} shows that there is at most one ``bubble
point''. More precisely, if we suppose the contrary, then for a given
$\varepsilon>0$ we can choose the neighbourhoods $\Omega_1$ and
$\Omega_2$ such that
$$
\sum_{i}\int_{\Omega_i}\abs{\nabla U}^2\mathit{dVol}_g<\varepsilon.
$$
The potentials $V_n$ converge weakly to $\lambda_*-\mu$, and passing
to the limit in inequality~\eqref{var_ineq}, we obtain
$$
\sum_i\alpha_i^2e_i\leqslant\int_M\abs{\nabla\psi}^2\mathit{dVol}_g
+\sum_{i}\int_{\Omega_i}\abs{\nabla U}^2\mathit{dVol}_g
\leqslant 2\varepsilon.
$$
Now choosing $\varepsilon$ such that $2\varepsilon<\min\{e_i\}$, we
obtain a contradiction. Thus, the limit measure $\mu$ has the form
$\abs{\nabla U}^2+e\delta_x$. 

We claim that one ``bubble point'' can not occur also, and the limit
measure is absolutely continuous. Suppose the contrary. Let $\Omega$
be a coordinate ball centred at the ``bubble point'' $x$. Since the
capacity of a point is zero, then for any $\varepsilon>0$ there exists
a function $\varphi\in C^\infty_0(\Omega)$ such that
$0\leqslant\varphi\leqslant 1$,
$$
\varphi=1\text{ in a neighbourhood of }x,\quad\text{and}\quad
\int_M\abs{\nabla\varphi}^2\mathit{dVol}_g<\varepsilon.
$$
As in Sect.~\ref{concentrat} by $v_n$ we denote positive ground states
of the Schrodinger operators $(-\Delta_g+V_n)$; we assume that their
$L_1$-norms are equal to one. Consider the sequence
$$
0<\alpha_n=\int_M\varphi\cdot v_n\mathit{dVol}_g\leqslant 1;
$$
without loss of generality, we may assume that the $\alpha_n$'s
converge to some limit $\alpha\geqslant 0$. By $\psi_n$ we denote the
functions $(\varphi-\alpha_n)$, and by $\psi$ the function
$(\varphi-\alpha)$. Since each $\psi_n$ is $L_2$-orthogonal to
$v_n$, by variational principle we have
$$
\lambda_1(V_n)\int_M\psi^2_n\mathit{dVol}_g\leqslant\int_M
\abs{\nabla\psi_n}^2\mathit{dVol}_g+\int_MV_n\psi_n^2\mathit{dVol}_g.
$$
Since $V_n\rightharpoonup\lambda_*-\abs{\nabla U}^2-e\delta_x$, then
passing to the limit and making elementary transformations, we obtain
$$
\int_M\abs{\nabla U}^2\psi^2\mathit{dVol}_g+e(1-\alpha)^2\leqslant
\int_M\abs{\nabla\psi}^2\mathit{dVol}_g.
$$
The last relation implies
$$
\alpha^2\int_{M\backslash\Omega}\abs{\nabla U}^2\mathit{dVol}_g+
(1-\alpha)^2e\leqslant\int_M\abs{\nabla\varphi}^2\mathit{dVol}_g<\varepsilon
$$
By elementary analysis, the left-hand side is bounded below by
$$
0<\left(e\int_{M\backslash\Omega}\abs{\nabla U}^2\mathit{dVol}_g\right)/
\left(e+\int_{M\backslash\Omega}\abs{\nabla U}^2\mathit{dVol}_g\right).
$$
This yields a contradiction, since $\varepsilon>0$ is arbitrary.

Thus, we see that no bubbling can occur, and the harmonic maps $U_n$
converge in $C^\infty$-topology to the harmonic map $U$. Further, by
relation~\eqref{id} the extremal potentials $V_n$ also converge in
$C^\infty$-topology to the potential
$$
V=\lambda_*-\abs{\nabla U}^2,\qquad V\in L^\infty_0(M).
$$
By standard perturbation theory~\cite{Kato}, the eigenvalues
$\lambda_1(V_n)$ has to converge to $\lambda_1(V)$, and we conclude
that the first eigenvalue $\lambda_1(V)$ coincides with $\lambda_*$.
Further, we see that the components $u_i$, $i=1,\ldots,m$, of the
harmonic map $U$ are first eigenfunctions of the Schrodinger operator
$(-\Delta_g+V)$. Finally, since $\sum_iu_i^2=1$, Prop.~\ref{props}
implies that the potential $V$ is extremal.

\section{Proof of Theorem~A: the eigenvalue of the bubble.}
\label{bubble}
For a proof of Theorem~A it remains to show that the hypotheses
$V_n\rightharpoonup\lambda_*(1-\delta_x)$, and
$\lambda_1(V_n)\to\lambda_*$ as $n\to +\infty$ imply that $\lambda_*$
has to be equal to $8\pi$. This is the content of the present
section. First, we prove the estimate $\lambda_*\leqslant 8\pi$ for
concentrating sequences of not necessarily extremal potentials.
To get the lower bound we study the Schrodinger equation on the bubble
sphere obtained as the limit equation under convergence of renormalised
eigenfunctions.

\subsection{General upper bound: $\lambda_*\leqslant 8\pi$}
The following lemma gives an estimate for arbitrary concentrating
sequences of potentials; cf.~\cite[p.888-889]{Na96}.
\begin{lemma}
\label{upper_b}
Let $M$ be a closed surface endowed with a Riemannian metric $g$, and
$V_n\in L^\infty_0(M)$, $n=1,2,\ldots$, be a sequence such that
$V_n\rightharpoonup\lambda_*(1-\delta_x)$, and $\lambda_1(V_n)
\to\lambda_*$ as $n\to +\infty$. Then the number $\lambda_*$ is not
greater than $8\pi$. 
\end{lemma}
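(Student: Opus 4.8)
The plan is to use the concentration profile directly: since $V_n \rightharpoonup \lambda_*(1-\delta_x)$, the mass $\lambda_*$ is escaping into the point $x$, and I want to exploit the variational characterization of $\lambda_1(V_n)$ with carefully chosen test functions supported near $x$ to trap $\lambda_*$ against the conformal/Dirichlet geometry of a single point — which is exactly the $S^2$ situation, giving the bound $8\pi$. Concretely, first I would work in a conformal coordinate chart around $x$, so that near $x$ the metric is $g = e^{2\omega}|dz|^2$ with $\omega$ smooth, and rescale. The key is that the Dirichlet energy $\int |\nabla u|^2\,dVol_g$ is conformally invariant in dimension two, so I can pull back competitor functions from a fixed model (a stereographic-type map from a small disc to $S^2$) without distortion of the numerator's gradient term.

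The main steps, in order. (1) Fix $\varepsilon > 0$ and a small coordinate ball $\Omega \ni x$; on $\Omega$ the metric is conformal to the flat one. (2) Build, for each scale $\rho \to 0$, a test map $\phi_\rho : M \to S^2 \subset \mathbf{R}^3$ which agrees with (a rescaled copy of) the standard stereographic parametrization of $S^2$ on the ball $B_\rho$, is suitably cut off to a constant outside $B_{2\rho}$, and has total Dirichlet energy $\le 8\pi + o(1)$ as $\rho \to 0$ — this is the standard "bubbling" competitor whose energy concentrates at $x$ and equals that of a round sphere in the limit. (3) Apply Hersch's Lemma: for each $n$ there is a Möbius transformation $s_n$ of $S^2$ such that the three components of $s_n \circ \phi_{\rho} $ are $L^2(dVol_g)$-orthogonal to the ground state $v_n$ of $(-\Delta_g + V_n)$; by conformal invariance of the energy in 2D, $E(s_n \circ \phi_\rho) = E(\phi_\rho) \le 8\pi + o(1)$, and $\sum_i (y^i \circ s_n \circ \phi_\rho)^2 = 1$. (4) Use the variational principle componentwise, sum over $i=1,2,3$, to get
$$
\lambda_1(V_n) \le \int_M |\nabla(s_n\circ\phi_\rho)|^2\,dVol_g + \int_M V_n \Big(\sum_i (y^i\circ s_n\circ\phi_\rho)^2\Big)\,dVol_g = E(s_n\circ\phi_\rho) + \int_M V_n\,dVol_g = E(s_n\circ\phi_\rho),
$$
since $V_n$ has zero mean and $\sum_i(y^i\circ\cdot)^2 \equiv 1$. (5) Conclude $\lambda_1(V_n) \le 8\pi + o(1)$, and let $n\to\infty$ and then $\rho \to 0$ (or $\varepsilon \to 0$) to get $\lambda_* \le 8\pi$.

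The main obstacle is controlling the potential term $\int_M V_n(\sum_i(y^i\circ s_n\circ\phi_\rho)^2)\,dVol_g$ rigorously — and here the trick above is precisely that the "sum of squares" maps into $S^2$ make this term exactly $\int_M V_n\,dVol_g = 0$, independent of $n$ and of the Möbius transformation $s_n$, so the bad term disappears without needing any estimate on the concentrating mass. The remaining delicate point is then purely about the test competitor: one must check that the cutoff from $B_\rho$ to $B_{2\rho}$ contributes only $o(1)$ to the energy as $\rho \to 0$ (the logarithmic-cutoff / small-capacity-of-a-point argument, exactly as used in Sect.~\ref{concentrat}), and that the conformal factor $e^{2\omega}$ does not affect the gradient term (true in 2D) nor the $S^2$-valued constraint (it is pointwise, hence unaffected). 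A secondary subtlety is that $\phi_\rho$ need only be weakly conformal / $W^{1,2}$, which is enough for Hersch's Lemma and the variational principle; no regularity beyond that is required. Once these routine pieces are in place, letting $n\to\infty$ first (so $\lambda_1(V_n)\to\lambda_*$) and $\rho\to 0$ afterwards yields $\lambda_* \le 8\pi$.
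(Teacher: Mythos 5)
There is a genuine gap, and it is at the step you flag as the point of the argument: the identity $E(s_n\circ\phi_\rho)=E(\phi_\rho)$ does \emph{not} follow from conformal invariance. In two dimensions the Dirichlet energy is invariant under conformal changes of the \emph{domain} metric; it is \emph{not} invariant under postcomposition with a conformal diffeomorphism of the target unless the map itself is (weakly) conformal. Your $\phi_\rho$ is conformal on $B_\rho$, but on the transition annulus $B_{2\rho}\setminus B_\rho$, where you interpolate to a constant, it is not conformal. Hersch's lemma gives you no control on the M\"obius transformations $s_n$; precisely when the ground states $v_n$ put most of their mass in the region where $\phi_\rho$ is constant, $s_n$ must be an extreme M\"obius map that stretches the small spherical cap $\phi_\rho(B_{2\rho}\setminus B_\rho)$ over essentially all of $S^2$. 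The energy of $s_n\circ\phi_\rho$ on the annulus is then of order $\sup\lVert ds_n\rVert^2\cdot E(\phi_\rho|_{\text{annulus}})$, and there is no reason for this product to be $o(1)$; it can be bounded away from zero (or blow up), so the crucial estimate $E(s_n\circ\phi_\rho)\leqslant 8\pi+o(1)$ is unjustified.

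A symptom that something must be wrong is that your argument never uses the hypothesis $V_n\rightharpoonup\lambda_*(1-\delta_x)$ -- only $\int_M V_n=0$. If it were correct, it would show $\lambda_1(V)\leqslant 8\pi$ for \emph{every} $V\in L^\infty_0(M)$ on \emph{every} surface $M$, which is false: already the zero potential on the flat square torus of volume one has $\lambda_1(0)=4\pi^2>8\pi$, and the general bound is the genus-dependent $8\pi(\gamma+1)$ of Li--Yau. The paper's proof avoids the trap by \emph{not} cutting off the sphere-valued map: it keeps a genuine conformal map $\phi:\Omega\to S^2$ and multiplies its components by a \emph{scalar} cutoff $\psi$, so $\varphi^i_n=\psi\,(y^i\circ s_n\circ\phi)$. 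Because $s_n\circ\phi$ remains conformal, the term $\sum_i\int_\Omega|\nabla(y^i\circ s_n\circ\phi)|^2$ is bounded by the energy $8\pi$ of the M\"obius transformation, and the cross terms from the product rule are $O(\sqrt\varepsilon)$. The price is that $\sum_i(\varphi^i_n)^2=\psi^2\ne 1$, so the potential term $\int_M V_n\psi^2$ does \emph{not} vanish; this is exactly where the hypothesis $V_n\rightharpoonup\lambda_*(1-\delta_x)$ enters -- the $-\lambda_*$ produced by the Dirac mass at $x$ (where $\psi=1$) is what yields the inequality $\lambda_*\leqslant 8\pi+O(\sqrt\varepsilon)$. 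Your scheme discards that mechanism and replaces it with an energy identity that is not available.
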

\begin{proof}
Let $\Omega$ be an open coordinate ball around $x\in M$ on which the
metric $g$ is conformally Euclidean, and let
$$
\phi:\Omega\longrightarrow S^2\subset\mathbf R^3
$$
be a conformal map into the unit sphere in $\mathbf R^3$. Since a
point on the Euclidean plane has zero capacity, then for any
$\varepsilon>0$ there exists a function $\psi\in C^\infty_0(\Omega)$
such that $0\leqslant\psi\leqslant 1$,
$$
\psi=1\text{ in a neighbourhood of }x,\quad\text{and}\quad
\int_M\abs{\nabla\psi}^2\mathit{dVol}_g<\varepsilon.
$$
As above by $v_n$ we denote a positive ground state of the Schrodinger
operator $(-\Delta_g+V_n)$. By Hersch's lemma, Sect.~\ref{prems},
there exists a conformal transformation $s_n:S^2\to S^2$ such that
$$
\int_M\psi(y^i\circ s_n\circ\phi)v_n\mathit{dVol}_g=0\qquad
\text{for any}\quad i=1,2,3,
$$
where $(y^i)$ are coordinate functions in $\mathbf R^3$. In other
words, each function $\varphi^i_n=\psi(y^i\circ s_n\circ\phi)$ is
$L_2$-orthogonal to $v_n$, and by variational principle we have
$$
\lambda_1(V_n)\int_M(\varphi_n^i)^2\mathit{dVol}_g\leqslant\int_M
\abs{\nabla\varphi^i_n}^2\mathit{dVol}_g+\int_MV_n(\varphi^i_n)^2
\mathit{dVol}_g,
$$
for any $i=1,2,3$. Summing with respect to $i$, we obtain
\begin{equation}
\label{ineq}
\lambda_1(V_n)\int_M\psi^2\mathit{dVol}_g\leqslant\sum_i\int_M
\abs{\nabla\varphi^i_n}^2\mathit{dVol_g}+\int_MV_n\psi^2\mathit{dVol}_g.
\end{equation}
Now we estimate the first term on the right-hand side
\begin{multline*}
\sum_i\int_M\abs{\nabla\varphi^i_n}^2\mathit{dVol}_g\leqslant\sum_i
\int_M\psi^2\abs{\nabla(y^i\circ s_n\circ\phi)}^2\mathit{dVol}_g\\
+2\sum_i\int_M\psi\abs{\nabla(y^i\circ s_n\circ\phi)}\abs{\nabla\psi}
\mathit{dVol}_g+\int_M\abs{\nabla\psi}^2\mathit{dVol}_g.
\end{multline*}
The first sum on the right-hand side can be further estimated by
the quantity
$$
\sum_i\int_\Omega\abs{\nabla(y^i\circ s_n\circ\phi)}^2
\mathit{dVol}_g\leqslant\sum_i\int_{S^2}\abs{\nabla(y^i\circ s_n)}^2
\mathit{dVol}_{S^2}=8\pi;
$$
here we used the conformal invariance of the Dirichlet energy, which
in particular implies that the energy of a conformal diffeomorphism of
$S^2$ equals $8\pi$. Similarly the second sum is not greater that
\begin{multline*}
2\sum_i\int_\Omega\abs{\nabla(y^i\circ s_n\circ\phi)}\abs{\nabla\psi}
\mathit{dVol}_g\leqslant 2\varepsilon^{1/2}\sum_i\left(\int_\Omega
\abs{\nabla(y^i\circ s_n\circ\phi)}^2\mathit{dVol}_g\right)^{1/2}\\
\leqslant 10\pi^{1/2}\varepsilon^{1/2}.
\end{multline*}
Using these two estimates and the fact that the Dirichlet energy of
$\psi$ is less than $\varepsilon$, we obtain
$$
\sum_i\int_M\abs{\nabla\varphi^i_n}^2\mathit{dVol}_g\leqslant
8\pi+10\pi^{1/2}\varepsilon^{1/2}+\varepsilon.
$$
Combining the last inequality with the one in~\eqref{ineq}, and
passing to the limit as $n\to+\infty$, we arrive at the following
relation
$$
\lambda_*\leqslant 8\pi+10\pi^{1/2}\varepsilon^{1/2}+\varepsilon.
$$
Since $\varepsilon>0$ is arbitrary, we conclude that
$\lambda_*\leqslant 8\pi$.
\end{proof}

\subsection{The Schrodinger equation on the bubble sphere}
To obtain the lower bound $\lambda_*\geqslant 8\pi$, we study a
certain Schrodinger equation on the so-called bubble
sphere. Bubble spheres appear as  natural ``limit objects'' of
sequences of renormalised harmonic maps, describing the behaviour of
sequences near bubble points; see~\cite{SaU,Pa96}. The construction of 
a bubble sphere below uses a slightly non-standard renormalisation
procedure that is more suitable in our context.

We start with a sequence of harmonic maps
$$
M\ni x\longmapsto U_n(x)=(u_{i,n}(x))\in S^{m-1}\subset\mathbf R^m
$$
whose coordinates $u_{i,n}$ are first eigenfunctions of the
Schrodinger operator $(-\Delta_g+V_n)$. We consider the case when the
concentration occurs -- the sequence $\abs{\nabla U_n}^2$ converges
weakly to the one-point supported singular measure
$\lambda_*\delta_x$, see Sect.~\ref{alternative}. In particular,
$$
\Lambda_n=\max_{x\in M}\abs{\nabla U_n}^2(x)\to +\infty
\qquad\text{as }n\to+\infty.
$$
Let $x_n\in M$ be a point where the maximum of $\abs{\nabla U_n}^2(x)$
is achieved; without loss of generality, we can assume that the
$x_n$'s converge to a point $x_*\in M$. Let $\Omega$ be a chart ball
centred at $x_*$; we suppose that the metric $g$ is conformally
Euclidean on $\Omega$ and $g_{ij}(x_*)=\delta_{ij}$. For a sufficiently
large $n$, the mapping 
$$
\phi_n:D_n=\left\{x\in\mathbf R^2:\abs{x}<\sqrt{\Lambda_n}\right\}\to
\Omega,\qquad x\to x/\Lambda_n+x_n,
$$
is well-defined. We endow the ball $D_n$ with a Riemannian metric
$(g_n)_{ij}=g_{ij}\circ\phi_n$; equivalently, the $g_n$ equals
$\Lambda_n^2(\phi_n^*g)$. Consider the functions ${\bar u}_{i,n}=
u_{i,n}\circ\phi_n$ on each $D_n$; they satisfy the equations
\begin{equation}
\label{eq1}
-\Delta_{g_n}\bar u_{i,n}=\frac{1}{\Lambda_n^2}
\left(\lambda_1(V_n)-\bar V_n\right)\bar u_{i,n},
\end{equation}
where $\bar V_n=V_n\circ\phi_n$. Applying the Laplacian $\Delta_{g_n}$
to the identity $\sum_i\bar u_{i,n}^2=1$, we conclude that the maps
$$
D_n\ni x\longmapsto\bar U_n(x)=(\bar u_{i,n}(x))\in
S^{m-1}\subset\mathbf R^m
$$
are harmonic and satisfy the relations
\begin{equation}
\label{eq2}
\abs{\nabla\bar U_n}^2_{g_n}=\frac{1}{\Lambda_n^2}\left(
\lambda_1(V_n)-\bar V_n\right).
\end{equation}
By the definition of the $\phi_n$'s, we also have 
\begin{equation}
\label{eq3}
\abs{\nabla\bar U_n}_{g_n}(x)\leqslant 1,\qquad\text{and}\qquad
\abs{\nabla\bar U_n}_{g_n}(0)=1.
\end{equation}
Since the metrics $g_n$ converge to the Euclidean metric on $\mathbf
R^2$, the first inequality above together with standard Schauder
estimates, see~\cite{LU68}, imply that the maps $\bar U_n$ converge in
$C^\infty$-topology to a harmonic map $\bar U:\mathbf R^2\to\Omega$ on
each compact subset of $\mathbf R^2$. Finally, since the Dirichlet
energy is conformally invariant, it is straightforward to show that
\begin{equation}
\label{eq4}
\lim\sup\int_{D_n}\abs{\nabla\bar U_n}^2_{g_n}\mathit{dVol}_{g_n}
\leqslant\lambda_*\qquad\text{as }n\to+\infty.
\end{equation}
Besides, if $x_*$ does not coincide with the bubble point $x$, the
$\lim\sup$ on the left-hand side above vanishes.

Identifying $\mathbf R^2$ with $S^2\backslash\{p\}$ via the
stereographic projection, we can view $\bar U=(\bar u_i)$,
$i=1,\ldots,m$, as a harmonic map from $S^2\backslash\{p\}\to S^{m-1}$,
where the sphere $S^2$ is endowed with  the standard round metric
$g_s$. Using the conformal invariance of energy again, we conclude from
inequality~\eqref{eq4} that the map $\bar U$ has finite energy,
$E(\bar U)\leqslant\lambda_*$. Hence, by~\cite{SaU} its singularity at
$p$ is removable -- the map $\bar U$ extends to a smooth harmonic map
$S^2\to S^{m-1}$. By the second relation in~\eqref{eq3}, the map $\bar
U$ is non-constant, and its energy $E(\bar U)$ is strictly
positive. In particular, we conclude that the point $x_*$ coincides
with the bubble point $x$. Denote the energy of $\bar U$ by
$\bar\lambda$ and define the potential $\bar V$ on the sphere $S^2$ by
the formula
$$
\bar V=\bar\lambda-\abs{\nabla\bar U}^2_{g_s}.
$$
Clearly, it belongs to the space $L_0^\infty(S^2)$, and by
relation~\eqref{eq2} we have
$$
\frac{\kappa}{\Lambda_n^2}\left(\lambda_1(V_n)-\bar V_n\right)\to
\bar\lambda-\bar V
$$
in $C^\infty$-topology on compact sets in $S^2\backslash\{p\}$. Here
$\kappa$ stands for the conformal factor between the Euclidean metric
on $S^2\backslash\{p\}$ and the standard metric $g_s$ on $S^2$. Since
the Laplacian is conformally invariant in dimension two, then passing
to the limit in equation~\eqref{eq1}, we obtain
$$
(-\Delta_{g_s}+\bar V)\bar u_i=\bar\lambda\bar u_i,
\qquad\text{where }i=1,\ldots,m.
$$
Thus, we see that $\bar\lambda$ is an eigenvalue for the Schrodinger
operator $(-\Delta_{g_s}+\bar V)$ on the sphere, and the $\bar u_i$'s
are its eigenfunctions.
\begin{lemma}
\label{mult}
The eigenfunctions $\bar u_i$, $i=1,\ldots,m$, span a vector space
whose dimension is at most $3$.
\end{lemma}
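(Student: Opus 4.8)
The plan is to deduce the bound from the eigenvalue multiplicity estimate of Cheng~\cite{Ch76}, just as in the proof of Prop.~\ref{sphere}. The potential $\bar V=\bar\lambda-\abs{\nabla\bar U}^2_{g_s}$ is $C^\infty$-smooth and lies in $L^\infty_0(S^2)$; by~\cite{Ch76} the multiplicity of the \emph{first} eigenvalue of $(-\Delta_{g_s}+\bar V)$ is at most $3$. Since each $\bar u_i$ is a $\bar\lambda$-eigenfunction, it is therefore enough to show that $\bar\lambda$ coincides with the first eigenvalue $\lambda_1(\bar V)$: then all the $\bar u_i$ lie in the first eigenspace, which has dimension at most $3$. (The mere relation $\sum\bar u_i^2=1$ does not force this, as the Veronese map $S^2\to S^4$ shows; one must use that the $\bar u_i$ arise as limits of renormalised \emph{first} eigenfunctions of the concentrating sequence.)

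To prove $\bar\lambda=\lambda_1(\bar V)$ I argue by contradiction. Since $\bar\lambda>0\geqslant\lambda_0(\bar V)$, if $\bar\lambda\neq\lambda_1(\bar V)$ then $\bar\lambda\geqslant\lambda_2(\bar V)$, so there is a two-dimensional subspace $F\subset C^\infty(S^2)$, spanned by the ground state and a $\lambda_1(\bar V)$-eigenfunction, on which the Rayleigh quotient satisfies $\matheur R_{\bar V}(f)\leqslant\lambda_1(\bar V)<\bar\lambda$. I transplant the functions of $F$ onto $M$ near the concentration point. Identifying $S^2\setminus\{p\}$ with $\mathbf R^2$ by stereographic projection and fixing a logarithmic cut-off $\eta_R\in C_0^\infty(\mathbf R^2)$ with $\eta_R\equiv 1$ on $\{\abs x<R\}$ and $\int_{S^2}\abs{\nabla\eta_R}^2\mathit{dVol}_{g_s}\to 0$ as $R\to+\infty$, I set, for $f\in F$,
$$
\tilde f_n=(f\eta_R)\circ\phi_n^{-1}\ \text{on}\ \phi_n(D_n)\subset M,\qquad\tilde f_n=0\ \text{elsewhere},
$$
with $\phi_n$, $D_n$, $g_n$ as in this section. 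Using the conformal invariance of the Dirichlet energy in dimension two, the convergence $g_n\to g_{euc}$, and the $C^\infty_{loc}$-convergence $\kappa\Lambda_n^{-2}(\lambda_1(V_n)-\bar V_n)\to\bar\lambda-\bar V$ on $S^2\setminus\{p\}$ obtained above, a change of variables gives
$$
\int_M\abs{\nabla\tilde f_n}^2\mathit{dVol}_g+\int_MV_n\tilde f_n^2\mathit{dVol}_g\ \longrightarrow\ \bigl(\matheur R_{\bar V}(f)-\bar\lambda\bigr)\int_{S^2}f^2\mathit{dVol}_{g_s}
$$
as $n\to+\infty$ and then $R\to+\infty$, while $0<\int_M\tilde f_n^2\mathit{dVol}_g=O(\Lambda_n^{-2})\to 0$ as $n\to+\infty$. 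The right-hand side is negative for every nonzero $f\in F$; fixing $R$ large, all these limits are negative, uniformly on the unit $L^2(S^2)$-sphere of the finite-dimensional space $F$.

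Finally, I arrange orthogonality to the ground state $v_n$ of $(-\Delta_g+V_n)$: the linear functional $f\mapsto\int_M\tilde f_n v_n\mathit{dVol}_g$ on the two-dimensional space $F$ has a nontrivial kernel, so there is a nonzero $f_n\in F$, normalised by $\norm{f_n}_{L^2(S^2)}=1$, whose transplant $\tilde f_n$ is $L^2$-orthogonal to $v_n$; passing to a subsequence, $f_n\to f\in F$. By the uniformity of the estimates above we get $\matheur R_{V_n}(\tilde f_n)<0$ for all large $n$, whereas the variational principle and $\tilde f_n\perp v_n$ force $\matheur R_{V_n}(\tilde f_n)\geqslant\lambda_1(V_n)\to\lambda_*\geqslant\bar\lambda>0$ — a contradiction. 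Hence $\bar\lambda=\lambda_1(\bar V)$, and the lemma follows. The main work is the transplantation step, where the genuinely delicate points are the behaviour of the cut-off near the point $p$ at infinity (hence the logarithmic choice) and the bookkeeping of the conformal changes of variables between $g$, $g_n$, $g_{euc}$ and $g_s$ in the potential term; the selection of $f_n$ and the appeal to~\cite{Ch76} are routine.
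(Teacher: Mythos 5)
Your proof is correct, but it takes a genuinely different route from the paper. The paper never establishes that $\bar\lambda=\lambda_1(\bar V)$; instead it works directly with the nodal sets. Since each $u_{i,n}$ is a first eigenfunction and hence (by Courant and Cheng's structure theorem) has exactly two nodal domains, the paper first checks that the bubble point lies in the closure of $\bigcup_n u_{i,n}^{-1}(0)$ (otherwise $\bar u_i$ would not change sign on the bubble sphere and $\bar\lambda$ would be the ground state eigenvalue, contradicting $\bar\lambda>0$), and then argues by a geometric convergence of the renormalised nodal arcs that each $\bar u_i$ — and, by the same reasoning, every non-trivial linear combination $\sum c_i\bar u_i$, since $\sum c_iu_{i,n}$ is again a first eigenfunction — has exactly two nodal domains on $S^2$. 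Cheng's structure theorem (\cite[Lem.~3.1, 3.3]{Ch76}) then bounds the vanishing order at every nodal point by one, and the evaluation map $v\mapsto(v(z),\nabla v(z))\in\mathbf R^3$ gives $\dim\mathit{Span}(\bar u_i)\leqslant 3$. Your argument instead proves the stronger and cleaner statement $\bar\lambda=\lambda_1(\bar V)$ and then cites Cheng's multiplicity bound for the first eigenvalue as a black box. The trade-off is that you need the transplantation machinery (logarithmic cut-off, conformal bookkeeping of $\Lambda_n^{-2}\bar V_n\to\kappa(\bar V-\bar\lambda)$, uniformity of the quadratic forms on the unit sphere of the two-dimensional test space $F$, selection of $f_n$ with $\tilde f_n\perp v_n$, and unique continuation to ensure $\tilde f_n\not\equiv0$); the paper's proof avoids all of this by staying within the nodal-set framework, at the cost of reproving Cheng's dimension count for the possibly proper subspace $\mathit{Span}(\bar u_i)$ rather than invoking it for a full eigenspace. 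I verified the computation of the limit of the potential term and the small-denominator contradiction $\matheur R_{V_n}(\tilde f_n)\to-\infty$ versus $\matheur R_{V_n}(\tilde f_n)\geqslant\lambda_1(V_n)>0$; your observation that $\sum\bar u_i^2=1$ alone cannot force the dimension bound (Veronese map) is also well taken and explains why some use of the ``first eigenfunction'' hypothesis is indispensable in either route.
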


By Lemma~\ref{mult}, we see that the harmonic map $\bar U$, defined by
eigenfunctions $(\bar u_i)$, lies in the section of the unit sphere by
a subspace whose dimension is not greater than $3$. In other words the
harmonic map $\bar U$ is actually a harmonic map into the
$2$-dimensional unit sphere. Hence, its energy is an integer multiple
of $8\pi$ and, since $\bar\lambda>0$, we conclude that $\bar\lambda$
has to be at least $8\pi$. On the other hand, we have $\bar\lambda
\leqslant\lambda_*$ and, by Lemma~\ref{upper_b}, the latter is not
greater than $8\pi$. Thus, we obtain that $\lambda_*$ equals $8\pi$,
finishing the proof of Theorem~A. The rest of this section is devoted
to the proof of Lemma~\ref{mult}.

\subsection{Proof of Lemma~\ref{mult}}
To prove the lemma we analyse the structure of the nodal set of the
eigenfunctions $\bar u_i$. Following the arguments of Cheng~\cite{Ch76},
this allows to bound the vanishing order of the $\bar u_i$'s at each
nodal point, and hence to estimate the dimension of
$\mathit{Span}(\bar u_i)$.

First, since the $u_{i,n}$'s are first eigenfunctions of the
Schrodinger operator $(-\Delta_g+V_n)$, then each of them
changes sign. Moreover, by the results in~\cite{Ch76} the nodal set
$u_{i,n}^{-1}(0)$ is an immersed circle in $M$, and the complement
$M\backslash u_{i,n}^{-1}(0)$ has exactly two connected components,
called {\it nodal domains}. We claim that the bubble point $x$ belongs
to the closure of the set
$$
\bigcup_n u_{i,n}^{-1}(0)\qquad\text{for every}\qquad i=1,\ldots,m.
$$
Indeed, for otherwise there exists a neighbourhood of $x$ which
belongs to the nodal domain of $u_{i,n}$ for all sufficiently large
$n$. This, in turn, implies that the limit map $\bar u_i$ does not
change sign on the bubble sphere, and hence $\bar\lambda$ has to be a
zero eigenvalue for the corresponding Schrodinger operator. The latter
clearly contradicts to the fact that $\bar\lambda$ is positive.

A similar analysis yields that each limit eigenfunction $\bar u_i$ on
the bubble sphere has exactly two nodal domains; their nodal lines are
limits of renormalised arcs on the nodal lines of the $u_{i,n}$'s. Now
the structure theorem in~\cite{Ch76} implies that any point on the
nodal line $\bar u_i^{-1}(0)$ has vanishing order at most one. In more
detail, the nodal set near a critical point with vanishing order $k$
is diffeomorphic to the nodal set of a spherical harmonic of order $k$
in $\mathbf R^2$, which consists of $k$ straight lines passing through
the origin, see~\cite[Lem.~3.3]{Ch76}. Therefore, if $k$ is greater
than one, then by~\cite[Lem.~3.1]{Ch76} the set $S^2\backslash\bar
u_i^{-1}(0)$ has at least $3$ connected components -- a contradiction.

The same analysis equally applies to a non-trivial linear combination
of the $\bar u_i$'s, and we conclude that any point on its
nodal line also has a vanishing order at most one. Now
following~\cite[Th.~3.4]{Ch76}, we show that the dimension of
$\mathit{Span} (\bar u_i)$, $i=1,\ldots,m$, is not greater than
$3$. Suppose the contrary. Then for any $z\in S^2$ the map
$$
\mathit{Span} (\bar u_i)\ni v\longmapsto (v(z),\nabla v(z))\in\mathbf
R^3
$$
has a non-trivial kernel -- there exists a non-trivial linear
combination of the $\bar u_i$'s that vanishes at $z$ together with its
first derivatives. Thus, the vanishing order at $z$ is greater than
one -- a contradiction.\qed

\section{Final remarks}
\label{rems}

\noindent
{\it 1.~} The proof of Theorem~B is based on the version of the
bubbling convergence theorem for harmonic maps with a variable metric
on the domain surface, see~\cite[Lem.~1.2]{Pa96}. All our arguments in
Sect.~\ref{alternative} and~\ref{bubble} admit obvious adjustments to
cover this case also.

\medskip
\noindent
{\it 2.~} One can analyse the concentration of extremal
potentials from the point of view of the bubble tree convergence of
harmonic maps, as described in~\cite{Pa96}. (The latter is based on a
different renormalisation at the bubble point than the one used in
Sect.~\ref{bubble}). More precisely, one can show that when
$V_n\rightharpoonup e(1-\delta_x)$ the corresponding harmonic maps
$U_n$, given by eigenfunctions, converge to a constant harmonic map
with only one bubble attached at the point $x$; in other words, no
``secondary'' bubbles appear. Finally, mention that the equality
$\bar\lambda=\lambda_*$, obtained in Sect.~\ref{bubble}, reflects the
``no energy loss at the neck'' phenomenon.

\medskip
\noindent
{\it 3.~} It is extremely interesting to understand under what
hypotheses analogous concentration compactness properties hold for
more general (for example, maximising) sequences of potentials. 
This question is motivated by the existence problem for maximal (or
extremal) potentials, and has strong links with isoperimetric
inequalities for eigenvalues, see~\cite{LY82,Na96}.

{\small

}

\end{document}